\newtheorem{definition}{Definition}[section]
\newtheorem{theorem}[definition]{Theorem}
\newtheorem{lemma}[definition]{Lemma}
\newtheorem{corollary}[definition]{Corollary}
\newtheorem{remark}{Remark}[section]
\newcommand{\re}{\mathbb R} 
\DeclareMathOperator{\diver}{div}
\DeclareMathOperator{\rot}{rot}
\title[2D Navier-Stokes equations]{
Asymptotic properties of steady and nonsteady solutions
to the 2D Navier-Stokes equations with finite generalized Dirichlet integral
}
\author{Hideo Kozono, Yutaka Terasawa and Yuta Wakasugi}
\address[H. Kozono]{Department of Mathematics, Faculty of Science and Engineering,
Waseda University, Tokyo 169--8555, Japan, 
Research Alliance Center of Mathematical Sciences, Tohoku University, 
Sendai 980-8578, Japan}
\email[H. Kozono]{kozono@waseda.jp, hideokozono@tohoku.ac.jp}
\address[Y. Terasawa]{Graduate School of Mathematics, Nagoya University,
Furocho Chikusaku Nagoya 464-8602, Japan}
\email[Y. Terasawa]{yutaka@math.nagoya-u.ac.jp}
\address[Y. Wakasugi]{Graduate School of Engineering,
Hiroshima University,
Higashi-Hiroshima, 739-8527, Japan
}
\email[Y. Wakasugi]{wakasugi@hiroshima-u.ac.jp}
\begin{document}
\begin{abstract}
We consider the stationary and non-stationary Navier-Stokes equations in the whole plane $\re^2$ 
and in the exterior domain outside of the large circle.  
The solution $v$ is handled in the class with $\nabla v \in L^q$ for $q \ge 2$. 
Since we deal with the case $q \ge 2$,
our class is larger in the sense of spatial decay at infinity than that of the finite Dirichlet integral,
i.e., for $q=2$ where a number of results such as asymptotic behavior of solutions have been observed.  
For the stationary problem we shall show that
$\omega(x)= o(|x|^{-(1/q + 1/q^2)})$
and
$\nabla v(x) = o(|x|^{-(1/q+1/q^2)} \log |x|)$
as $|x| \to \infty$, where $\omega \equiv \rot v$.  
As an application,
we prove the Liouville type theorem under the assumption that 
$\omega \in L^q(\re^2)$.
For the non-stationary problem, a generalized $L^q$-energy identity is clarified.     
We also apply it to 
the uniqueness of the Cauchy problem and the Liouville type theorem for ancient solutions
under the assumption that
$\omega \in L^q(\re^2 \times I)$.
\end{abstract}
\keywords{Navier-Stokes equations; Liouville-type theorems}

\maketitle
\section{Introduction}
\footnote[0]{2010 Mathematics Subject Classification. 35Q30; 35B53; 76D05}

We consider the homogeneous stationary Navier-Stokes equations
\begin{align}%
\label{sns}
	\left\{ \begin{array}{l}
		- \Delta v + ( v \cdot \nabla ) v + \nabla p = 0, \\
		\diver v = 0
	\end{array}\right.
\end{align}%
in
$\mathbb{R}^2$ or an exterior domain 
$B_{r_0}^c \equiv \{ x=(x_1, x_2) \in \mathbb{R}^2 ; r = |x| > r_0 \}$
with some constant
$r_0 > 0$.
Here,
$v = v(x) = (v_1(x), v_2(x))$
and
$p=p(x)$
are the velocity vector and the scalar pressure, respectively.
\par
In the pioneer paper by Leray \cite{Le}, there is a solution $v$ of (\ref{sns}) 
with 
\begin{equation}\label{FDI}
\int_{r > r_0}|\nabla v(x)|^2 dx < \infty
\end{equation}
satisfying the homogeneous boundary condition on $r=r_0$.  
It has been an open question whether,  under the condition (\ref{FDI}),  
$v$ behaves like 
\begin{equation}\label{asym0}
v(x) \to v_{\infty}
\quad
\mbox{as $|x| \to \infty$}
\end{equation}
with some constant vector $v_{\infty} \in \re^2$.  
Gilbarg--Weinberger \cite{GiWe78} proved that if the solution $v$ in the class (\ref{FDI}) satisfies 
\begin{equation}\label{bounded} 
v\in L^{\infty}(B_{r_0}^c)
\end{equation} 
then there is constant vector $v_{\infty} \in \re^2$ such that 
\begin{equation}\label{mean}
\lim_{r \to\infty}\int_0^{2\pi}|v(r, \theta) - v_{\infty}|d\theta = 0
\end{equation}
with 
\begin{align}\label{asym1}
	&\omega (r, \theta) = o(r^{-3/4}),\\
	&\nabla v (r,\theta) = o(r^{-3/4} (\log r ))
\end{align} 
uniformly in $\theta\in (0, 2\pi)$ as $r \to\infty$,
where
$\omega$
is the vorticity
$\omega= \partial_{x_1} v_2- \partial_{x_2}v_{1}$.
Later on Amick \cite{Am88} and Korobkov--Pileckas--Russo \cite{KoPiRu17} 
proved that every solution $v$ of (\ref{sns}) 
with the finite Dirichlet integral (\ref{FDI}) 
is bounded as in (\ref{bounded}), and so necessarily satisfies (\ref{mean}) and (\ref{asym1}). 
Recently, Korobkov--Pileckas--Russo \cite{KoPiRu18} succeeded to obtain a remarkable result 
which states that every solution $v$ of (\ref{sns}) with (\ref{FDI}) converges uniformly at infinity, 
i.e., 
\begin{equation}\label{uniform} 
\lim_{r\to\infty}\sup_{\theta\in (0, 2\pi)}|v(r, \theta)- v_\infty|=0, 
\end{equation}
where $v_{\infty}\in \re^2$ is the constant vector as in (\ref{mean}).  
On the other hand, for the small prescribed constant vector $v_\infty \in \re^2$, 
the existence of solutions $v$ of (\ref{sns}) with (\ref{FDI}) 
having the parabolic wake region has been fully investigated by Finn-Smith \cite{FiSm67}.  
\par
In this paper, we consider a generalized class 
\begin{align}%
\label{nablavlq}
\int_{r > r_0} |\nabla v(r,\theta)|^q \,dx < \infty
\quad
\mbox{with some $2 < q < \infty$.}
\end{align}%
Since our domain is unbounded and since we are interested in the asymptotic behavior of solutions 
$v$ of (\ref{sns}), the larger $q$, the weaker the assumption on the decay of $\nabla v$
at the spatial infinity. 
On the other hand, so far as the problem on existence and uniqueness of solutions to (\ref{sns}), 
the class of solutions $v$ is restricted to (\ref{FDI}).  
Even for the linearized Stokes equations, it is difficult to find solutions $v$ satisfying (\ref{nablavlq}) 
but not (\ref{FDI}). For instance, it is shown by Kozono-Sohr \cite{KoSo92} that 
the Stokes equations are uniquely solvable if and only if $v$ satisfies  (\ref{FDI}).   
In this respect, our motivation might be artificial. 
However, from the viewpoint of the relation between the explicit decay rate at the spacial infinity of solutions 
and their class such as (\ref{nablavlq}), we bring focus onto the effect of the value of general $2 < q < \infty$. 
Furthermore, it seems also an interesting question how large we may take $q$ in (\ref{nablavlq}) 
for validity of the Liouville-type theorem.      
\par
\bigskip
Our first result is on the following
pointwise decay estimates for
the vorticity $\omega= \partial_{x_1} v_2- \partial_{x_2}v_{1}$
and
$\nabla v$.
\begin{theorem}\label{thm1}
Let $v$
be a smooth solution of \eqref{sns} in
$B_{r_0}^c=\{ x \in \mathbb{R}^2 ; r > r_0 \}$
satisfying \eqref{nablavlq} with some
$q \in (2,\infty)$.
Then, we have
\begin{align}%
\label{estomegav}
	|\omega (r,\theta)| &= o (r^{-(1/q+1/q^2)}) \quad (r\to \infty), \\
\label{estnablav}
	|\nabla v (r,\theta)| &= o (r^{-(1/q+1/q^2)} \log r) \quad (r\to \infty)
\end{align}%
uniformly in
$\theta \in [0,2\pi]$.
\end{theorem}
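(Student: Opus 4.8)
\section*{Proof proposal for Theorem \ref{thm1}}

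The plan is to close an elliptic equation for the vorticity, exploit the maximum principle to reduce the pointwise bound on $\omega$ to oscillation and mean control along circles, convert the integral hypothesis \eqref{nablavlq} into a quantitative decay rate, and finally recover the decay of $\nabla v$ from that of $\omega$. First I would take $\rot$ of the momentum equation in \eqref{sns}; using $\diver v=0$ in the plane this yields the stationary drift--diffusion equation
\[
-\Delta\omega+v\cdot\nabla\omega=0 \quad\text{in } B_{r_0}^c ,
\]
equivalently $\Delta\omega=\diver(\omega v)$. Since $|\omega|\le\sqrt2\,|\nabla v|$, the hypothesis \eqref{nablavlq} gives $\omega\in L^q(B_{r_0}^c)$, that is $\int_{r_0}^\infty\big(\int_0^{2\pi}|\omega(r,\theta)|^q\,d\theta\big)r\,dr<\infty$. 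Because the equation carries no zeroth--order term, the maximum principle applies on every annulus: the extrema of $\omega$ over $\{r_1\le|x|\le r_2\}$ are attained on the two bounding circles. This reduces the bound on $M(r):=\max_\theta|\omega(r,\theta)|$ to controlling (i) the angular oscillation of $\omega$ and (ii) the angular mean $\bar\omega(r)=\frac1{2\pi}\int_0^{2\pi}\omega\,d\theta$.

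For (i) I would use, for the mean--free part,
\[
\operatorname{osc}_{\theta}\omega(r,\cdot)\le\int_0^{2\pi}|\partial_\theta\omega|\,d\theta\le(2\pi)^{1-1/q}\Big(\int_0^{2\pi}|\partial_\theta\omega|^q\,d\theta\Big)^{1/q},
\]
together with $|\partial_\theta\omega|\le r|\nabla\omega|$; interior elliptic estimates for the drift equation on balls of radius comparable to $r$ convert $\nabla\omega$ back into $\omega$ on a slightly larger annulus. A dyadic mean--value selection over $[r,2r]$ applied to the finite integral of $\int_0^{2\pi}|\nabla\omega|^q\,d\theta$ then furnishes radii on which the oscillation is small with a quantitative power of $r$, and the maximum principle interpolates the bound to all radii. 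For (ii), averaging the vorticity equation over $\theta\in[0,2\pi]$ gives the radial identity
\[
2\pi\,(r\bar\omega')'=\frac{d}{dr}\Big(r\int_0^{2\pi}\omega\,v_r\,d\theta\Big),
\]
where $v_r$ is the radial component of $v$, which I would integrate to estimate $\bar\omega$. Balancing the integrability exponent $q$ in the oscillation bound against the gain provided by the maximum principle is what produces the rate $o(r^{-(1/q+1/q^2)})$ asserted in \eqref{estomegav}.

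Once \eqref{estomegav} is in hand, I would deduce \eqref{estnablav} from the representation $-\Delta v=\nabla^\perp\omega$, valid for divergence--free $v$ in the plane with $\nabla^\perp=(\partial_{x_2},-\partial_{x_1})$: writing $\nabla v$ as a singular integral of $\omega$ and inserting the decay of $\omega$, the borderline two--dimensional Calder\'on--Zygmund kernel produces precisely the logarithmic loss recorded in \eqref{estnablav}. The step I expect to be the main obstacle is the transport term $v\cdot\nabla\omega$: in contrast with the finite--Dirichlet case $q=2$, for $q>2$ the velocity $v$ need not be bounded, so the drift coefficient is large and the constants in the maximum principle and in the interior estimates threaten to degenerate. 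I expect to overcome this by exploiting the divergence form $\diver(\omega v)$ together with $\omega\in L^q$ and the scale--invariant Morrey control $\operatorname{osc}_{\{R<|x|<2R\}}v\lesssim R^{1-2/q}\|\nabla v\|_{L^q}$, so that only the oscillation of $v$, and not its pointwise size, enters the estimates; carefully tracking the resulting powers of $R$ is what pins down the exponent $1/q+1/q^2$.
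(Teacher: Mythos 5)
Your skeleton (curl of the momentum equation, maximum principle on annuli, oscillation-plus-mean control on selected circles, singular-integral recovery of $\nabla v$) is indeed the Gilbarg--Weinberger strategy that the paper follows, but your quantitative engine fails exactly at the step you flagged as the main obstacle, and the failure is in the exponent arithmetic, not merely in the constants. Your oscillation bound is
\begin{equation*}
\operatorname{osc}_{\theta}\,\omega(r,\cdot)\ \le\ C\Bigl(\int_0^{2\pi}|\partial_\theta\omega|^q\,d\theta\Bigr)^{1/q}\ \le\ C\,r\Bigl(\int_0^{2\pi}|\nabla\omega|^q\,d\theta\Bigr)^{1/q},
\end{equation*}
and a mean-value selection on a dyadic annulus can give at best $\int_0^{2\pi}|\nabla\omega(r_n,\theta)|^q\,d\theta\le C r_n^{-2}\|\nabla\omega\|_{L^q(\mathrm{ann})}^q$, hence $\operatorname{osc}_{\theta}\,\omega(r_n,\cdot)\lesssim r_n^{1-2/q}\|\nabla\omega\|_{L^q(\mathrm{ann})}$. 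Since $1-2/q>0$, this bound \emph{grows} as $r_n\to\infty$ unless $\|\nabla\omega\|_{L^q}$ over dyadic annuli decays at a definite polynomial rate; nothing in your plan supplies such a rate (convergence of an integral only yields $o(1)$ smallness of its tails), so the target $o(r^{-1/q-1/q^2})$ cannot be reached this way. Matters are worse because $\nabla\omega\in L^q$ is not a hypothesis: your route to it through interior estimates for $\Delta\omega=\diver(\omega v)$ necessarily pays a factor of the size of the drift on the annulus, and by Morrey both $\sup|v|$ and $\operatorname{osc}\,v$ there are of order $r^{1-2/q}$, so replacing the pointwise size of $v$ by its oscillation does not remove the loss. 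The paper's substitute for all of this is the weighted energy estimate of Lemma \ref{lem_lq2},
\begin{equation*}
\int_{r>r_1} r^{2/q}\,|\nabla\omega|^2\,|\omega|^{q-2}\,dx<\infty,
\end{equation*}
proved by testing the vorticity equation with $h'(\omega)$, $h=|\omega|^q$, against a cutoff $\eta\sim r^{2/q}$: since $\diver v=0$, the drift enters only through $\int h(\omega)\,v\cdot\nabla\eta\,dx$, and $|v|\,|\nabla\eta|\lesssim r^{1-2/q}\cdot r^{2/q-1}=O(1)$, i.e.\ the growth of $v$ is exactly cancelled by the decay of $\nabla\eta$ --- a compensation that interior Calder\'on--Zygmund estimates do not see. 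This quadratic-in-$\nabla\omega$ quantity is then paired, via Young's inequality, with the angular oscillation of $|\omega|^q$ (not of $\omega$), namely $\int_0^{2\pi}|\omega|^{q-1}|\omega_\theta|\,d\theta$, on selected circles carrying the weight $r^{1+1/q}$; the weight $r^{2/q}$ is precisely what upgrades the exponent from $1/q$ to $1/q+1/q^2$. Your proposal has no substitute for this mechanism.

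There is a second, independent gap in your derivation of \eqref{estnablav}. Pointwise decay of $\omega$ alone does not yield a pointwise bound on the singular integral $\nabla v=\nabla(-\Delta)^{-1}\nabla^{\perp}\omega$: Calder\'on--Zygmund operators are not bounded on $L^\infty$, and the near-diagonal contribution $\int_{|y-x|<1}|y-x|^{-2}\,|\omega(y)-\omega(x)|\,dy$ requires a modulus of continuity of $\omega$, not just its size. The paper supplies this with the H\"older estimate of Lemma \ref{lem_holder}, $|\omega(x_1)-\omega(x_2)|\le C\mu(R)|x_1-x_2|^{1/2}$, proved through a Dirichlet-growth condition and Morrey's lemma, and then splits the Cauchy--Pompeiu representation of $\partial_z f$, $f=v_1-iv_2$, into a near region (H\"older bound) and a far region (sup bound), the latter producing the logarithm after optimizing the splitting radius $\delta=R^{-1+2/q}$. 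Your sketch would need this continuity estimate, or an equivalent, to be completed; the ``log loss from the borderline kernel'' cannot be extracted from \eqref{estomegav} alone.
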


\begin{remark}
From the assumption \eqref{nablavlq} with a density argument,
we can see that
the velocity $v$ itself satisfies
\begin{align}
\label{estv}
	|v (r,\theta)| &= o(r^{1-2/q}) \quad (r \to \infty)
\end{align}
(see Lemma \ref{lem_lq1}).
\end{remark}

Theorem \ref{thm1} and the maximum principle immediately give
the following Liouville type theorem.
\begin{corollary}[Liouville type theorem]\label{cor_liouville1}
Let $v$ be a smooth solution of
\eqref{sns} in $\mathbb{R}^2$ satisfying
\begin{align*}%
	\nabla v \in L^q(\mathbb{R}^2)\quad
	\left( \mbox{or}\  \omega \in L^q(\mathbb{R}^2) \right)
\end{align*}%
with some $q \in (2,\infty)$.
Then, $v$ must be a constant vector.
\end{corollary}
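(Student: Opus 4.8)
The plan is to combine the sharp vorticity decay from Theorem \ref{thm1} with the maximum principle for the vorticity transport equation, and then to invoke a Liouville theorem for harmonic functions. First I would record that the two hypotheses are equivalent. One direction is pointwise, $|\omega|\le C|\nabla v|$. For the converse, from the definition of $\omega$ together with $\diver v=0$ one has the differential identity $\Delta v=\nabla^\perp\omega$, so that $\nabla v$ is recovered from $\omega$ by a Calderón–Zygmund singular integral operator; the corresponding $L^q$-estimate yields $\|\nabla v\|_{L^q}\le C\|\omega\|_{L^q}$ for $1<q<\infty$. Hence it suffices to treat the case $\nabla v\in L^q(\re^2)$. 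Restricting the solution to any exterior domain $B_{r_0}^c$, Theorem \ref{thm1} applies and yields
\begin{align*}
	\omega(x)=o(|x|^{-(1/q+1/q^2)})\qquad(|x|\to\infty),
\end{align*}
so in particular $\omega\to 0$ uniformly as $|x|\to\infty$.

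Next I would derive the scalar equation for $\omega$. Taking $\rot$ of the momentum equation in \eqref{sns}, the pressure gradient disappears and, using $\diver v=0$, the convective term reduces to a pure transport term, giving
\begin{align*}
	-\Delta\omega+(v\cdot\nabla)\omega=0\qquad\text{in }\re^2.
\end{align*}
This is a second-order uniformly elliptic equation with no zeroth-order term, so the classical maximum principle holds on every ball $B_R$; the drift $v$ is smooth, hence bounded on each $\overline{B_R}$, and causes no difficulty there. Fixing $x_0$ and taking $R>|x_0|$, the maximum principle gives $\min_{\partial B_R}\omega\le\omega(x_0)\le\max_{\partial B_R}\omega$, and letting $R\to\infty$ both outer quantities tend to $0$ by the decay above. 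Thus $\omega(x_0)=0$, and since $x_0$ is arbitrary, $\omega\equiv0$ on $\re^2$.

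Finally, with $\rot v=\omega=0$ and $\diver v=0$ each component of $v$ is harmonic on $\re^2$. By the Remark, $|v(x)|=o(|x|^{1-2/q})$, and since $q>2$ we have $1-2/q<1$, so $v(x)=o(|x|)$. Applying the interior gradient estimate for harmonic functions on $B_R(x_0)$,
\begin{align*}
	|\nabla v(x_0)|\le\frac{C}{R}\sup_{\overline{B_R(x_0)}}|v|=\frac{1}{R}\,o\big(R^{1-2/q}\big)=o\big(R^{-2/q}\big)\to0\qquad(R\to\infty),
\end{align*}
we obtain $\nabla v\equiv0$, so $v$ is a constant vector.

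The step I expect to be the main obstacle is the passage to the limit in the maximum principle on the unbounded plane: it is essential that Theorem \ref{thm1} supplies \emph{uniform} pointwise decay of $\omega$, so that $\max_{\partial B_R}\omega\to0$, rather than a mere integral bound. The possible growth of the drift $v$ at infinity is not an issue, since the maximum principle is applied only on the bounded domains $B_R$, where $v$ is automatically bounded.
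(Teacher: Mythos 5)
Your proof is correct and follows essentially the same route as the paper: the Calder\'{o}n--Zygmund inequality to reduce to the case $\nabla v \in L^q(\re^2)$, the decay \eqref{estomegav} from Theorem \ref{thm1}, the maximum principle for the drift equation $\Delta\omega - v\cdot\nabla\omega = 0$ to get $\omega \equiv 0$, and then harmonicity of $v$ plus its sublinear growth \eqref{estv} to conclude constancy. The only difference is that you spell out the steps the paper treats as immediate (the ball-wise maximum principle argument and the final Liouville step via interior gradient estimates), and these details are all sound.
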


\begin{remark}
{\rm (i)}
When $q=2$, the corresponding result was given by
Gilbarg--Weinberger \cite[Theorem 2]{GiWe78}.

\noindent
{\rm (ii)}
Bildhauer--Fuchs--Zhang \cite[Theorem 1.5]{BiFuZh13}
proved the Liouville type theorem in
$\mathbb{R}^2$
under the assumption
$\limsup_{|x|\to \infty} |x|^{-\alpha} |v(x)| < \infty$
with some
$\alpha < 1/3$.
Thus, noting \eqref{estv}, we see that 
Corollary \ref{cor_liouville1} is included in \cite{BiFuZh13} if
$2<q< 3$.
Therefore, our novelty is the case
$3 \le q <\infty$.

\noindent
{\rm (iii)}
When
$q=\infty$,
the assertion of the above corollary is not true.
Indeed,
$v = (x_1, -x_2)$
and
$p= -\frac12 |x|^2$
satisfy \eqref{sns} and $\omega = 0$,
while
$v$
is not a constant vector
(this example is given in
\cite[Remark 3.1]{LeZhZh17}).

\noindent
{\rm (iv)}
Recently, the Liouville-type theorem for 3D stationary Navier-Stokes equations
is intensively studied.
We refer the reader to
\cite{Ch1, Ch15, ChWo, Ga, KoTeWa17, Se}.

\noindent
{\rm (v)}
In the appendix, we give an alternate proof of
Corollary \ref{cor_liouville1}
which does not rely on the estimates established in Theorem \ref{thm1}.
\end{remark}

Next, we consider the 2D non-stationary Navier-Stokes equations
\begin{align}%
\label{ns}
	\left\{ \begin{array}{ll}
		\partial_t v - \Delta v + ( v \cdot \nabla ) v + \nabla p = 0,&x \in \mathbb{R}^2, t\in I,\\
		\diver v = 0, &x \in \mathbb{R}^2, t\in I,\\
	\end{array}\right.
\end{align}%
where $I \subset \re$ is an interval.
We also deal with the corresponding problem for the vorticity
\begin{align}%
\label{ns_vor}
		\partial_t \omega - \Delta \omega + v \cdot \nabla \omega = 0,
		\quad x \in \mathbb{R}^2, t\in I.
\end{align}%

\begin{theorem}[$L^q$-energy identity]\label{thm_energy_id}
Let
$\omega$
be a smooth solution of \eqref{ns_vor}.
We assume that there exists
$q \in [2, \infty)$
such that
\begin{equation}\label{eqn:1.13}
	\omega \in L^q(\mathbb{R}^2 \times I)
\end{equation}%
holds.
Then, for any $t, t' \in I$ with $t ' < t$,
we have the $L^q$-energy identity
\begin{align}
	& \int_{\re^2} |\omega(x,t)|^{q}\,dx
	+ q(q-1) \int_{t'}^t \int_{\re^2} |\nabla \omega(x,\tau)|^2 |\omega(x,\tau)|^{q-2} \,dx d\tau \label{eqn:1.14} \\
	&= \int_{\re^2} |\omega(x,t')|^{q}\,dx, \nonumber
\end{align}
provided that
$\omega(\cdot,t') \in L^q(\mathbb{R}^2)$.
\end{theorem}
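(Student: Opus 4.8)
The plan is to derive the $L^q$-energy identity by testing the vorticity equation against $|\omega|^{q-2}\omega$ and integrating over $\re^2 \times (t',t)$. Formally, multiplying \eqref{ns_vor} by $|\omega|^{q-2}\omega$ and integrating in space, the time-derivative term produces $\frac{1}{q}\frac{d}{dt}\int_{\re^2}|\omega|^q\,dx$, the diffusion term $-\int_{\re^2}\Delta\omega\,|\omega|^{q-2}\omega\,dx$ yields after integration by parts the dissipation $(q-1)\int_{\re^2}|\nabla\omega|^2|\omega|^{q-2}\,dx$, and the transport term should vanish because $\diver v = 0$. Integrating in time from $t'$ to $t$ and multiplying by $q$ then gives \eqref{eqn:1.14}. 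The entire difficulty is in justifying each of these formal manipulations rigorously under the sole assumption $\omega \in L^q(\re^2\times I)$, since the domain is unbounded and no a priori decay of $v$ or $\nabla\omega$ is built into the hypotheses.

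\textbf{Justification of the transport term.} First I would show that $\int_{\re^2} (v\cdot\nabla\omega)\,|\omega|^{q-2}\omega\,dx = 0$. Writing $|\omega|^{q-2}\omega\,\nabla\omega = \frac{1}{q}\nabla(|\omega|^q)$, this becomes $\frac{1}{q}\int_{\re^2} v\cdot\nabla(|\omega|^q)\,dx = -\frac{1}{q}\int_{\re^2}(\diver v)\,|\omega|^q\,dx = 0$ using $\diver v = 0$ and integration by parts. To make this legitimate one must control the boundary contribution; I would do this by introducing a smooth spatial cutoff $\phi_R$ supported in $B_{2R}$ with $\phi_R \equiv 1$ on $B_R$ and $|\nabla\phi_R| \lesssim R^{-1}$, carrying out all integrations by parts against $\phi_R$, and then sending $R \to \infty$. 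The remainder term from the cutoff in the transport part is of the form $\frac{1}{q}\int v\cdot\nabla\phi_R\,|\omega|^q\,dx$, which must be shown to vanish; here the decay of $v$ (e.g. from \eqref{estv}, or from the structure of $v$ recovered from $\omega$ via the Biot--Savart law) combined with $\omega \in L^q$ should suffice.

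\textbf{The main obstacle.} The hardest part will be controlling the diffusion term's cutoff remainders and justifying the reappearance of the dissipation integral without assuming a priori that $\int_{\re^2}|\nabla\omega|^2|\omega|^{q-2}\,dx$ is finite; indeed the point of the identity is to \emph{produce} this finiteness. The integration by parts on $-\int \Delta\omega\,|\omega|^{q-2}\omega\,\phi_R\,dx$ generates, besides the nonnegative dissipation density $(q-1)|\nabla\omega|^2|\omega|^{q-2}\phi_R$, a commutator term $-\int \nabla\omega\cdot\nabla\phi_R\,|\omega|^{q-2}\omega\,dx$ that I would estimate by Cauchy--Schwarz, splitting it as $\bigl(\int|\nabla\omega|^2|\omega|^{q-2}\phi_R\bigr)^{1/2}\bigl(\int|\nabla\phi_R|^2|\omega|^q\phi_R^{-1}\bigr)^{1/2}$ so that the first factor is absorbed into the dissipation and the second factor vanishes as $R\to\infty$ thanks to $|\nabla\phi_R|^2 \lesssim R^{-2}$ and $\omega \in L^q$. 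Because the dissipation density is nonnegative, I would pass to the limit using monotone convergence for that term and dominated convergence (justified by the global $L^q$-in-spacetime bound) for the remaining terms.

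\textbf{Time regularity and the endpoint.} Finally, to integrate in time I would first establish the identity in its differentiated form $\frac{d}{dt}\int|\omega|^q\,dx = -q(q-1)\int|\nabla\omega|^2|\omega|^{q-2}\,dx$ for a.e.\ $t$, using smoothness of $\omega$ on the interior of $I$ to guarantee that $t\mapsto\int|\omega(\cdot,t)|^q\,dx$ is absolutely continuous on compact subintervals, and then integrate from $t'$ to $t$. The hypothesis $\omega(\cdot,t')\in L^q(\re^2)$ secures the finiteness of the right-hand side of \eqref{eqn:1.14} and lets the lower endpoint be taken as a genuine value rather than a limit; combined with the global assumption \eqref{eqn:1.13}, it also guarantees the time-integrability of the dissipation so that \eqref{eqn:1.14} is an equality of finite quantities.
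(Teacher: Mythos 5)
There is a genuine gap, and it sits exactly at the point you pass over most quickly: the transport cutoff remainder. Your plan tests with $|\omega|^{q-2}\omega\,\phi_R$ directly, so the remainder from the convection term is $\frac{1}{q}\int v\cdot\nabla\phi_R\,|\omega|^q\,dx$, and you assert that ``the decay of $v$ (e.g.\ from \eqref{estv}, or from Biot--Savart) combined with $\omega\in L^q$ should suffice.'' But $v$ does not decay under the hypotheses of the theorem: \eqref{estv} is a statement about the \emph{stationary} problem, and even there it asserts growth $o(|x|^{1-2/q})$, not decay; in the nonstationary setting the best one gets from $\omega(\cdot,\tau)\in L^q$ via Calder\'on--Zygmund and Sobolev embedding is $|v(x,\tau)-v(0,\tau)|\le C|x|^{1-2/q}\|\omega(\cdot,\tau)\|_{L^q}$. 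Inserting this into your remainder gives, at each time $\tau$, a bound of order $R^{-2/q}\,\|\omega(\cdot,\tau)\|_{L^q(\re^2)}\,\|\omega(\cdot,\tau)\|_{L^q(B_{2R}\setminus B_R)}^{q}$. This does tend to $0$ for a.e.\ fixed $\tau$, but to pass the limit $R\to\infty$ inside the time integral you need a time-integrable dominating function, and the natural dominant here is $\|\omega(\cdot,\tau)\|_{L^q}^{q+1}$, which is \emph{not} controlled by the sole assumption \eqref{eqn:1.13} (that only gives $\|\omega(\cdot,\tau)\|_{L^q}^{q}\in L^1_\tau$). So the dominated convergence step fails, and no rearrangement of H\"older exponents repairs it, because the full power $|\omega|^q$ appears against a factor $v$ that grows. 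This is precisely why the paper does not test with $|\omega|^{q-2}\omega$ but with $h'(\omega)$, where $h$ is the truncation $h(\omega)=|\omega|^q$ for $|\omega|\le\omega_*$ and $h(\omega)=\omega_*^{q-1}(q|\omega|-(q-1)\omega_*)$ for $|\omega|\ge\omega_*$: the interpolation bound $h(\omega)\le C\omega_*|\omega|^{q-1}$ lowers the power of $\omega$ in the transport remainder to $q-1$, so that after H\"older the time integrand is dominated by $\omega_*\|\omega(\cdot,\tau)\|_{L^q}^{q}\in L^1_\tau$, dominated convergence applies as $R\to\infty$, and the full identity \eqref{eqn:1.14} is then recovered by monotone convergence as $\omega_*\to\infty$. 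This two-parameter limit (first $R$, then $\omega_*$) is the key idea your proposal is missing; by contrast, the diffusion commutator you single out as ``the main obstacle'' is the easy term (integrate by parts once more to get $\frac{1}{q}\int|\omega|^q\Delta\phi_R\,dx$, which only needs $R^{-2}$ and $\omega\in L^q$ — no Cauchy--Schwarz absorption, and no weight $\phi_R^{-1}$, which in your splitting is singular on the boundary of the support).

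A secondary but real problem is your final step. You propose to prove absolute continuity of $t\mapsto\int_{\re^2}|\omega(\cdot,t)|^q\,dx$ ``using smoothness of $\omega$,'' but smoothness is local and gives neither finiteness of that integral for $t>t'$ nor differentiability of it; indeed finiteness of $\|\omega(\cdot,t)\|_{L^q}$ for $t>t'$ is part of what the theorem produces, just like the finiteness of the dissipation term that you correctly flag. The paper avoids this circularity by never differentiating the untruncated quantity: all identities are first derived with the compactly supported cutoff $\psi_R$ and the truncation $h$ in place (where every term is finite and the time integration is elementary), and the conclusion is obtained purely by passing to the limit in a closed identity.
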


From the above theorem, we immediately have
the uniqueness of the solution to the Cauchy problem,
and the Liouville-type theorem for the ancient solution.
\begin{corollary}\label{cor_liouville}
{\rm (i) (Uniqueness of the Cauchy problem)}
Let $T>0$ and $I = [0,T]$, and let
$\omega$ be a smooth solution to \eqref{ns_vor} satisfying \eqref{eqn:1.13}
with some $q \in [2,\infty)$.
Moreover, we assume
$\omega (x,0) = 0$.
Then, we have $\omega \equiv 0$.\\
{\rm (ii) (Liouville-type theorem for ancient solutions)}
Let $I = (-\infty, 0)$
and $\omega$ an ancient solution to \eqref{ns_vor} satisfying
 \eqref{eqn:1.13}
with some $q \in [2,\infty)$.
Then, we have $\omega \equiv 0$.
\end{corollary}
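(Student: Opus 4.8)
The plan is to derive both assertions directly from the $L^q$-energy identity of Theorem~\ref{thm_energy_id}, which I would read as the statement that the spatial quantity $E(s):=\int_{\re^2}|\omega(x,s)|^q\,dx$ is non-increasing in $s$, since \eqref{eqn:1.14} expresses $E(t)$ as $E(t')$ minus a nonnegative dissipation term. Both parts then reduce to controlling $E$ at a suitable left endpoint.

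For part (i), I would simply apply \eqref{eqn:1.14} with $t'=0$ and an arbitrary $t\in(0,T]$. Since $\omega(\cdot,0)=0$, the hypothesis $\omega(\cdot,t')\in L^q(\re^2)$ holds trivially with $E(0)=0$, so the right-hand side of the identity vanishes. As both terms on the left-hand side are nonnegative, each must be zero; in particular $E(t)=0$ for every $t\in(0,T]$. Hence $\omega(\cdot,t)=0$ for all $t\in[0,T]$, i.e.\ $\omega\equiv 0$.

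For part (ii) the difficulty is the absence of an initial time. The key observation is that $\omega\in L^q(\re^2\times I)$ with $I=(-\infty,0)$ means $\int_{-\infty}^0 E(s)\,ds<\infty$. By Fubini, $E(s)<\infty$ for a.e.\ $s$, and the time-integrability forces $\liminf_{s\to-\infty}E(s)=0$: if this lower limit were a positive constant $c$, then $E(s)>c/2$ for all sufficiently negative $s$, making the time integral diverge. I would therefore select a sequence $t'_n\to-\infty$ lying in the full-measure set where $\omega(\cdot,s)\in L^q(\re^2)$ and along which $E(t'_n)\to 0$. Fixing an arbitrary $t\in(-\infty,0)$ and applying \eqref{eqn:1.14} on $[t'_n,t]$ for $n$ large enough that $t'_n<t$, I obtain $E(t)\le E(t'_n)$; letting $n\to\infty$ gives $E(t)=0$, so $\omega(\cdot,t)=0$. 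Since $t<0$ was arbitrary, $\omega\equiv 0$.

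The main obstacle, and essentially the only non-formal step, is the limiting argument in part (ii): one must simultaneously ensure that the energy identity is applicable at the chosen times (guaranteed by picking $t'_n$ in the full-measure set on which $\omega(\cdot,s)\in L^q$) and that a subsequence with $E(t'_n)\to 0$ genuinely exists. The latter follows cleanly from the $L^1$-in-time bound together with the monotonicity of $E$ furnished by the dissipation term, which excludes any positive lower limit of the energy at $-\infty$.
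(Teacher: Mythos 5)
Your proof is correct and takes essentially the same route as the paper: part (i) by applying the identity \eqref{eqn:1.14} with $t'=0$ so that the vanishing right-hand side forces $\int_{\mathbb{R}^2}|\omega(x,t)|^q\,dx=0$, and part (ii) by using $\omega\in L^q(\mathbb{R}^2\times I)$ to extract a sequence $t_n\to-\infty$ with $\|\omega(t_n)\|_{L^q(\mathbb{R}^2)}\to 0$ and then passing to the limit in the monotonicity $\|\omega(t)\|_{L^q(\mathbb{R}^2)}\le\|\omega(t_n)\|_{L^q(\mathbb{R}^2)}$. Your explicit justification of the sequence's existence (the Fubini/liminf argument and the selection of $t_n$ in the full-measure set where $\omega(\cdot,s)\in L^q(\mathbb{R}^2)$) merely spells out details the paper leaves implicit.
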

\begin{remark}
The identity (\ref{eqn:1.14}) is invariant under the scaling
$\omega(x,t) \mapsto \lambda^2 \omega(\lambda x, \lambda^2 t)$.
It is known that in the 2D case, every weak solution $v$ of (\ref{ns}) in the Leray-Hopf class, i.e., 
$v \in L^\infty(0, T; L^2(\re^2))\cap L^2(0, T; H^1(\re^2))$ is smooth and satisfies 
the energy identity
\begin{align*}
\int_{\re^2}|v(x, t)|^2 dx + 2\int_0^t\int_{\re^2}|\nabla v(x, \tau)|^2d\tau 
= \int_{\re^2}|v_0(x)|^2dx
\end{align*}
for all $t \in (0, T)$.  Hence, we may regard Theorem \ref{thm_energy_id} as the 
result on the 
generalized $L^q$-energy identity for the vorticity $\omega$.  
\end{remark}

\section{Proof of Theorem \ref{thm1}}
\subsection{Proof of the estimate of the vorticity and Corollary \ref{cor_liouville1}}
We first prove \eqref{estomegav}.
Hereafter, we denote by
$C = C(\ast, \cdots, \ast)$
positive constants depending only on the quantities appearing in parentheses.

The proof of Theorem \ref{thm1} is similar to that of
\cite[Theorem 5]{GiWe78}.
Our idea is to control the quantity
\begin{align*}
	\int |\nabla \omega|^2 |\omega|^{q-2}\,dx.
\end{align*}
A similar quantity is also used in \cite{BiFuZh13}.

We first show the asymptotic behavior for a vector field
satisfying \eqref{nablavlq}.

\begin{lemma}\label{lem_lq1}
If a vector field
$v = v(x) = (v_1, v_2) \in L^q_{{\rm loc}}(B_{r_0}^c)$
satisfies \eqref{nablavlq} with some
$q \in (2, \infty)$,
then
\begin{align}
\label{vest}
	\lim_{r\to \infty} r^{-(1-2/q)} \sup_{\theta \in(0, 2\pi)} |v(r,\theta)| = 0.
\end{align}
\end{lemma}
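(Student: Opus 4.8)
The plan is to reduce the statement to a gradient-only oscillation estimate on dyadic annuli and then to upgrade the resulting $O(r^{1-2/q})$ growth bound to the claimed $o(r^{1-2/q})$ rate by exploiting that the $L^q$-mass of $\nabla v$ over far-away annuli tends to zero. Set $R_k = 2^k r_0$, let $A_k = \{R_k \le |x| \le R_{k+1}\}$, and write $\eta_k = \|\nabla v\|_{L^q(A_k)}$. Since \eqref{nablavlq} gives $\sum_{k\ge 0}\eta_k^q = \|\nabla v\|_{L^q(B_{r_0}^c)}^q < \infty$, in particular $\eta_k \to 0$ as $k \to \infty$; this vanishing of the tail is what will produce the little-$o$.

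First I would establish, for $q > 2$, the scale-invariant oscillation bound $\mathrm{osc}_{A_k} v \le C R_k^{1-2/q}\eta_k$. Rescaling to the unit annulus $A = \{1\le|y|\le 2\}$ via $y = x/R_k$ turns this into $\mathrm{osc}_A \tilde v \le C\|\nabla\tilde v\|_{L^q(A)}$ with $\tilde v(y) = v(R_k y)$, which follows from Morrey's inequality: since $A$ is bounded and quasiconvex, any two of its points are joined by a path in $A$ of length comparable to their Euclidean distance, so integrating $\nabla\tilde v$ along such a path and applying Hölder with exponent $q > 2$ controls the oscillation by the gradient norm alone (only the seminorm enters, so no full $W^{1,q}$-norm is needed). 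The hypothesis $v \in L^q_{\rm loc}$ guarantees $v \in W^{1,q}(A_k)$, so a continuous representative exists and this argument is legitimate.

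Next I would chain these estimates. Fix a base point $x_0 \in A_0$. For $x \in A_k$, choosing intermediate points on the shared circles $\{|x| = R_j\}$ of consecutive annuli and using the triangle inequality gives $|v(x) - v(x_0)| \le \sum_{j=0}^k \mathrm{osc}_{A_j} v \le C\sum_{j=0}^k R_j^{1-2/q}\eta_j$, hence $|v(x)| \le |v(x_0)| + C\sum_{j=0}^k R_j^{1-2/q}\eta_j$. The main point is then the elementary summation fact that, with $a := 2^{1-2/q} > 1$,
\[
	a^{-k}\sum_{j=0}^k a^j \eta_j \longrightarrow 0 \qquad (k\to\infty).
\]
Indeed, given $\varepsilon > 0$ pick $N$ with $\eta_j < \varepsilon$ for $j \ge N$; the head $a^{-k}\sum_{j<N}a^j\eta_j$ vanishes as $k\to\infty$, while the tail is bounded by $\varepsilon\sum_{j\le k}a^{j-k} \le \varepsilon\, a/(a-1)$. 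Since $R_j^{1-2/q} = r_0^{1-2/q} a^j$, this yields $\sum_{j\le k}R_j^{1-2/q}\eta_j = o(R_k^{1-2/q})$, and because $|v(x_0)| = O(1) = o(R_k^{1-2/q})$ as well, we conclude $\sup_{A_k}|v| = o(R_k^{1-2/q})$. For $r \in [R_k, R_{k+1}]$ one has $r^{1-2/q} \sim R_k^{1-2/q}$, so $r^{-(1-2/q)}\sup_\theta|v(r,\theta)| \to 0$, which is exactly \eqref{vest}.

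The step I expect to be the genuine obstacle is precisely this passage from $O$ to $o$: the naive pointwise bound from integrating $|\nabla v|$ around a single circle only gives $O(r^{1-2/q})$ (or worse, since the angular gradient mass $\int_0^{2\pi}|\nabla v(r,\theta)|^q\,d\theta$ need not decay at any particular radius), and the decay is gained only after organizing the estimate over dyadic scales and invoking $\eta_k \to 0$. A secondary technical care is needed in the oscillation lemma to keep the bound in terms of $\|\nabla v\|_{L^q(A_k)}$ alone rather than the full Sobolev norm, which I handle via the quasiconvexity/path-integral form of Morrey's inequality (equivalently, by chaining finitely many interior ball estimates inside a slightly enlarged annulus).
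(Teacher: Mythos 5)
Your proof is correct, but it takes a genuinely different route from the paper's. The paper's argument is an approximation argument: it cuts off $v$ near $r=r_0$, invokes the density of $C_0^{\infty}(\re^2)$ in the homogeneous space $\{w : \nabla w \in L^q(\re^2)\}$ (the cited Lemma 2.1 of Kozono--Sohr) to write $\chi v = w_{\ep} + v_{\ep}$ with $\|\nabla w_{\ep}\|_{L^q(\re^2)} < \ep$ and $v_\ep$ compactly supported, and then applies the global Morrey embedding $|w_\ep(x)-w_\ep(0)| \le C|x|^{1-2/q}\|\nabla w_\ep\|_{L^q}$; the little-$o$ comes from the arbitrariness of $\ep$. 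You instead localize: the scale-invariant oscillation bound $\mathrm{osc}_{A_k} v \le C R_k^{1-2/q}\|\nabla v\|_{L^q(A_k)}$ (correctly reduced by rescaling to Morrey's inequality, seminorm only, on the fixed quasiconvex unit annulus), chaining across dyadic annuli, and the elementary fact that $a^{-k}\sum_{j\le k} a^j\eta_j \to 0$ when $a>1$ and $\eta_j\to 0$; here the little-$o$ comes from the vanishing of the tail norms $\eta_k$, which is the discrete counterpart of the paper's $\ep$-approximation. What your approach buys is self-containedness: it needs no density theorem for homogeneous Sobolev spaces and no extension/cutoff to all of $\re^2$, only a local embedding on a fixed annulus plus an $\ep$-splitting of a sum, and it makes the source of the decay rate completely transparent. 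What the paper's approach buys is brevity (three lines of estimates once the density lemma is quoted) and a formulation that treats all radii simultaneously rather than through a dyadic scale-by-scale bookkeeping. One cosmetic point in your write-up: the annulus $A_0=\{r_0\le |x|\le 2r_0\}$ touches the circle $r=r_0$, where the hypotheses (stated on the open set $B_{r_0}^c$) need not give $v\in W^{1,q}$ up to the boundary; simply start the dyadic decomposition at any $R_*>r_0$, which does not affect the limit $r\to\infty$.
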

\begin{proof}
We first take a cut-off function
$\chi \in C^{\infty}(\mathbb{R}^2)$
so that
$\chi(x) = 0$ $(|x| \le r_0)$
and
$\chi(x) = 1$ $(|x| \ge r_0+1)$.
Then, we have
$\nabla (\chi v) \in L^q(\mathbb{R}^2)$.
Therefore, by \cite[Lemma 2.1]{KoSo92}, for any
$\varepsilon >0$,
there exists
$v_{\varepsilon} \in C_0^{\infty}(\mathbb{R}^2)$
satisfying
$\| \nabla(\chi v) - \nabla v_{\varepsilon} \|_{L^q(\mathbb{R}^2)} < \varepsilon$.
Let
$w_{\varepsilon} = \chi v - v_{\varepsilon}$.
Then, by the Sobolev embedding, we estimate
\begin{align*}
	\limsup_{r \to \infty} r^{-(1-2/q)} \sup_{\theta \in (0, 2\pi)} |v(r,\theta)|
		&\le \limsup_{r \to \infty} r^{-(1-2/q)} \sup_{\theta \in (0, 2\pi)}
			|w_{\varepsilon}(r,\theta) - w_{\varepsilon}(0)| \\
		&\quad + \limsup_{r \to \infty} r^{-(1-2/q)} |w_{\varepsilon}(0)|\\
		&\quad + \limsup_{r \to \infty} r^{-(1-2/q)}
			\sup_{\theta \in(0, 2\pi)} |v_{\varepsilon}(r,\theta)|\\
		&\le C \| \nabla w_{\varepsilon} \|_{L^q(\mathbb{R}^2)} \\
		&\le C \varepsilon,
\end{align*}
where the constant
$C>0$
is independent of
$\varepsilon$.
Since $\varepsilon$ is arbitrary, we conclude
$\limsup_{r \to \infty} r^{-(1-2/q)} \sup_{\theta \in (0, 2\pi)} |v(r,\theta)| = 0$.
\end{proof}

\begin{lemma}\label{lem_lq2}
Let $r_0 >0$. Suppose that
$v=(v_1, v_2)$ is a smooth solution of \eqref{sns} in $r>r_0$.  
If there exists
$q \in (2,\infty)$
such that
\begin{align*}
	\int_{r>r_0} |\nabla v|^q\,dx < \infty, 
\end{align*}
then the vorticity
$\omega = \partial_{x_1} v_2 - \partial_{x_2} v_1$
satisfies
\begin{align*}
	\int_{r>r_1} r^{2/q} |\nabla \omega|^2 |\omega|^{q-2}\,dx < \infty
\end{align*}
for all $r_1 > r_0$.
\end{lemma}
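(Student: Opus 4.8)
The plan is to run a weighted energy argument for the vorticity, where the weight $\psi(r) = r^{2/q}$ is chosen to be exactly the critical power matching the decay of $v$ provided by Lemma~\ref{lem_lq1}. First I would record the equation solved by $\omega$: taking the curl of the momentum equation in \eqref{sns} and using that we are in two dimensions with $\diver v = 0$ (so that the pressure and vortex-stretching terms vanish), one finds the scalar drift--diffusion equation
\begin{equation*}
	\Delta \omega = v \cdot \nabla \omega \qquad (r > r_0).
\end{equation*}
Since $|\omega| \le C|\nabla v|$, the hypothesis \eqref{nablavlq} yields $\omega \in L^q(B_{r_0}^c)$; this $L^q$-bound is the ``capital'' I will ultimately trade for the weighted gradient bound.

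Next I would fix two cutoffs: a fixed inner cutoff $\zeta$ with $\zeta \equiv 0$ near $r=r_0$ and $\zeta\equiv 1$ for $r \ge r_1$ (to dispose of the unknown behaviour on the boundary $r=r_0$, where no data is given), and an outer cutoff $\eta_R$ with $\eta_R\equiv 1$ for $r \le R$, $\eta_R\equiv 0$ for $r \ge 2R$, $|\nabla\eta_R|\le C/R$ and $|\Delta\eta_R|\le C/R^2$. With $\phi=\zeta\eta_R$ and $\psi=r^{2/q}$, I would test the vorticity equation against $\phi^2\psi\,|\omega|^{q-2}\omega$ and integrate by parts. The decisive algebraic observation is that, writing $|\omega|^{q-2}\omega\,\nabla\omega = \tfrac1q\nabla(|\omega|^q)$ and using $\diver v = 0$, both the convection term and the cross term generated by $\nabla(\phi^2\psi)$ can be integrated by parts a second time, so that all dependence on $\nabla\omega$ disappears from the right-hand side. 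This produces the identity
\begin{align*}
	(q-1)\int \phi^2 \psi\, |\omega|^{q-2}|\nabla\omega|^2\,dx
	&= \frac1q\int \Delta(\phi^2\psi)\,|\omega|^q\,dx \\
	&\quad + \frac1q\int v\cdot\nabla(\phi^2\psi)\,|\omega|^q\,dx,
\end{align*}
so that \emph{no} Young inequality and \emph{no} absorption of the gradient term is required.

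It then remains to bound the right-hand side uniformly in $R$. Expanding $\nabla(\phi^2\psi)$ and $\Delta(\phi^2\psi)$, I would split off the terms in which no derivative falls on $\phi$ from the genuine cutoff errors. For the former, a direct computation gives $\Delta\psi = \tfrac{4}{q^2}r^{2/q-2}$, which is bounded on the support, and --- this is the heart of the matter --- $|v\cdot\nabla\psi| \le \tfrac2q|v|\,r^{2/q-1}$, which is bounded precisely because Lemma~\ref{lem_lq1} furnishes $|v| = o(r^{1-2/q})$; the exponent $2/q$ is exactly the largest one for which $|v||\nabla\psi|$ stays bounded. Hence these terms are dominated by $C\int_{r>r_0}|\omega|^q\,dx<\infty$. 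The cutoff errors from the fixed inner cutoff $\zeta$ live on a fixed compact annulus and are again controlled by $C\int|\omega|^q\,dx$, while those from $\eta_R$ are supported on $\{R\le r\le 2R\}$ and carry a factor of size $O(R^{2/q-2})+o(1)$ multiplying $\int_{R<r<2R}|\omega|^q\,dx$, hence tend to $0$ as $R\to\infty$ since $\int_{r>R}|\omega|^q\,dx$ is the tail of a convergent integral. Letting $R\to\infty$ and invoking monotone convergence on the left, I conclude $\int_{r>r_1} r^{2/q}|\nabla\omega|^2|\omega|^{q-2}\,dx<\infty$.

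The main technical obstacle I anticipate is the rigorous justification of the two integrations by parts when $2<q<3$, where $|\omega|^{q-2}$ is only continuous, not smooth, at the zeros of $\omega$. I would handle this by replacing $|\omega|^{q-2}$ with the smooth regularization $(\omega^2+\delta)^{(q-2)/2}$, performing all the manipulations with the regularized weight, and then passing to the limit $\delta\to 0$, which is harmless because $q-2>0$ makes the relevant integrands converge monotonically or dominatedly. Apart from this regularization the argument is bookkeeping of cutoff terms; the conceptual content lies entirely in the choice $\psi=r^{2/q}$ and in the fact that $\diver v = 0$ allows one to remove $\nabla\omega$ from the right-hand side completely.
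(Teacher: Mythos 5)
Your proof is correct and follows essentially the same route as the paper: there, the weight and cutoff are combined into the single function $\eta_R(r) = r^{2/q}\xi_1(r)\xi_2(r/R)$, the vorticity equation is tested via the divergence identity with $h(\omega)=|\omega|^q$, the condition $\diver v = 0$ removes all $\nabla\omega$ terms from the right-hand side exactly as in your identity, and the remainder is bounded by $C\int_{r>r_0}|\omega|^q\,dx$ using $|v|=o(r^{1-2/q})$ from Lemma \ref{lem_lq1}. The only differences are cosmetic: you separate the weight $\psi=r^{2/q}$ from the cutoffs, and your regularization of $|\omega|^{q-2}$ is actually unnecessary, since for $q>2$ the function $h(\omega)=|\omega|^q$ is already $C^2$.
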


\begin{proof}
Let
$R>r_1>r_0$
and take
$\xi_1, \xi_2 \in C^{\infty}((0,\infty))$
so that
\begin{align*}%
	\xi_1(r) = \begin{cases}
		0 &(r \le \frac{r_0+r_1}{2}),\\
		1 &(r \ge r_1),
		\end{cases}
		\quad
		\xi_2(r) = \begin{cases}
			1& (r \le 1),\\
			0& (r\ge 2).
		\end{cases}
\end{align*}%
We define
\begin{align*}%
	\eta_R (r) = r^{2/q} \xi_1(r) \xi_2 \left( \frac{r}{R} \right).
\end{align*}%
We easily see that
\begin{align*}
	| \nabla \eta_R| \le Cr^{-1+2/q},\quad  |\Delta \eta_R| \le C 
\end{align*}
hold with some constant $C>0$.

Let
$h=h(\omega)$
be a $C^1$ and piecewise $C^2$ function, which is determined later.
Then,
noting $\nabla (h(\omega(x)) = h'(\omega(x)) \nabla \omega(x)$
and
using the condition
$\diver v = 0$,
we have the following identity
\cite[p. 385]{GiWe78}
\begin{align*}%
	\diver\left[
		\eta_R \nabla h - h \nabla \eta_R - \eta_R h v \right]
		&= \eta_R h'' |\nabla \omega|^2
			-h \left[ \Delta \eta_R + v \cdot \nabla \eta_R \right]
			+ \eta_R h' \left[ \Delta \omega - v \cdot \nabla \omega \right].
\end{align*}%
Since
$\omega$
satisfies the vorticity equation
$\Delta \omega - v \cdot \nabla \omega = 0$
and
$\eta_R = 0$
near
$r=r_0$ and $r=\infty$,
integrating the above identity on
$\{ r > r_0 \}$,
we have
\begin{align}%
\label{eq_h_en}
	\int_{r>r_0} \eta_R h''(\omega) |\nabla \omega|^2 \,dx
		= \int_{r>r_0} h(\omega) ( \Delta \eta_R + v\cdot \nabla \eta_R )\,dx.
\end{align}%
Taking
$h(\omega) = |\omega|^q$,
we obtain
\begin{align*}
	\int_{r_1 < r < R} r^{2/q}
		|\nabla \omega|^2 |\omega|^{q-2}\, dx
	&\le C \int_{r>r_0} |\omega|^q |\Delta \eta_R + v\cdot \nabla \eta_R |\,dx.
\end{align*}
From Lemma \ref{lem_lq1}, we see
$r^{-1+2/q} |v(r,\theta)| \le C$
and hence,
\begin{align*}
	\int_{r_1 < r < R} r^{2/q} |\nabla \omega|^2 |\omega|^{q-2}\, dx
	&\le C \int_{r>r_0} |\omega|^q \,dx, 
\end{align*}
where $C$ is a constant independent of $R$.   
The right-hand side is finite and so is the left-hand side.
Letting $R\to \infty$, we complete the proof.
\end{proof}

From the above lemma,
we obtain the following decay estimate of
$\omega$,
which is the assertion of Theorem \ref{thm1}.
\begin{lemma}\label{lem_lq3}
Under the assumption of Lemma \ref{lem_lq2}, we have
\begin{align*}
	\lim_{r\to \infty} r^{\frac{1}{q}+\frac{1}{q^2}}
		\sup_{\theta \in (0, 2\pi)} |\omega (r,\theta)| = 0.
\end{align*}
\end{lemma}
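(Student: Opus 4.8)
The plan is to prove the equivalent statement
$\Omega(r) := \sup_{\theta}|\omega(r,\theta)| = o(r^{-(1/q+1/q^2)})$ as $r\to\infty$.
I introduce $W := |\omega|^{q/2}$, so that $|\nabla W|^2 = \tfrac{q^2}{4}|\omega|^{q-2}|\nabla \omega|^2$ and $\Omega(r)^q = \sup_\theta W(r,\theta)^2$. Two integral facts drive the argument. First, Lemma \ref{lem_lq2} rewritten in polar coordinates gives $\int_{r_1}^\infty r^{1+2/q}E(r)\,dr < \infty$, where $E(r) := \int_0^{2\pi}|\nabla W|^2\,d\theta$. Second, since $|\omega|\le \sqrt 2\,|\nabla v|$, assumption \eqref{nablavlq} yields $\int_{r>r_1}|\omega|^q\,dx < \infty$, i.e. $\int_{r_1}^\infty r\,\Phi(r)\,dr < \infty$ with $\Phi(r):= \int_0^{2\pi}|\omega(r,\theta)|^q\,d\theta = \int_0^{2\pi}W^2\,d\theta$.

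Next, on each dyadic interval $[\rho/2,\rho]$ I would select, by a pigeonhole (mean value) argument, a ``good radius'' $\rho' \in [\rho/2,\rho]$ at which both weighted integrands are comparable to their averages, so that $E(\rho') \le C g(\rho)\rho^{-2-2/q}$ and $\Phi(\rho') \le C h(\rho)\rho^{-2}$, where $g(\rho) = \int_{\rho/2}^\rho s^{1+2/q}E\,ds \to 0$ and $h(\rho) = \int_{\rho/2}^\rho s\,\Phi\,ds \to 0$ are tails of convergent integrals. On the circle $r=\rho'$ I then split the maximum into its angular mean and its oscillation, $\sup_\theta W^2 \le \tfrac{1}{2\pi}\Phi(\rho') + \mathrm{osc}_\theta(W^2)$, and bound the oscillation by the total variation, $\mathrm{osc}_\theta(W^2)\le \int_0^{2\pi}|\partial_\theta(W^2)|\,d\theta \le 2\,\Phi(\rho')^{1/2}\rho' E(\rho')^{1/2}$, using $|\partial_\theta W|\le \rho'|\nabla W|$ and Cauchy--Schwarz. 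Inserting the good-radius bounds gives $\Omega(\rho')^q \le C h(\rho)\rho^{-2} + C(g(\rho)h(\rho))^{1/2}\rho^{-1-1/q}$; since $\rho^{-1-1/q}$ dominates and $g,h\to 0$, this is $o(\rho^{-1-1/q})$, hence $\Omega(\rho') = o(\rho^{-(1/q+1/q^2)})$. The exponent thus emerges precisely from balancing the mean (controlled by the $L^q$ bound on $\omega$) against the oscillation (controlled by the weighted vorticity energy of Lemma \ref{lem_lq2}).

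The remaining, and main, difficulty is that the estimate so far holds only along the good radii $\rho'$, not for every $r$. To upgrade it I would invoke the maximum principle: since $\omega$ solves $\Delta\omega - v\cdot\nabla\omega = 0$, the function $|\omega|$ is a subsolution of the same drift--diffusion operator, so its maximum over any annulus is attained on the boundary. Because the good-radius estimate already forces $\Omega(\rho'_n)\to 0$ along a sequence $\rho'_n\to\infty$, letting the outer radius tend to infinity shows that $\Omega(r) = \sup_{|x|\ge r}|\omega|$ is nonincreasing for large $r$. Finally, for an arbitrary large $r$ I would choose a good radius $\rho'\in[r/2,r]$ and conclude, by monotonicity, $\Omega(r)\le \Omega(\rho') = o(r^{-(1/q+1/q^2)})$, which is the assertion. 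I expect the careful justification of this monotonicity via the maximum principle (including the reduction $\Omega(r)=\sup_{|x|\ge r}|\omega|$) to be the delicate step, whereas the pigeonhole selection and the mean/oscillation splitting are essentially routine.
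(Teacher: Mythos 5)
Your proof is correct and follows essentially the same strategy as the paper: a pigeonhole/mean-value selection of good radii on dyadic intervals, a mean-plus-oscillation splitting on those circles that balances the $L^q$ bound on $\omega$ against the weighted integral $\int r^{2/q}|\nabla\omega|^2|\omega|^{q-2}\,dx$ of Lemma \ref{lem_lq2}, and the maximum principle on annuli bounded by good radii to pass to all $r$. The only differences are cosmetic (the substitution $W=|\omega|^{q/2}$, selecting one radius good for two integrals rather than applying the mean value theorem to a combined integrand, and phrasing the final step as monotonicity of the exterior supremum rather than a sandwich between consecutive good radii $r_n$, $r_{n+1}$).
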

\begin{proof}
We first note that
when
$2^n > r_0$,
the inequality
\begin{align*}%
	&\int_{2^n}^{2^{n+1}} \frac{dr}{r}
		\int_0^{2\pi} |\omega|^{q-2} \left( r^2 |\omega|^2
				+ r^{1+1/q} |\omega| |\omega_{\theta}| \right)\,d\theta \\
	&\le 
		C \int_{2^n<r<2^{n+1}} |\omega|^{q-2}
			\left( |\omega|^2 + r^{1/q} |\omega||\nabla \omega| \right)\,dx \\
	&\le C \int_{r > 2^n}|\omega|^{q-2} \left( |\omega|^2 + r^{2/q} |\nabla \omega|^2 \right)\,dx
\end{align*}%
holds.
From the mean value theorem for the integration,
there exists a sequence $r_n \in (2^n, 2^{n+1})$ such that
\begin{align*}%
	&\int_{2^n}^{2^{n+1}} \frac{dr}{r}
		\int_0^{2\pi} |\omega|^{q-2}
			\left( r^2 |\omega|^2 + r^{1+1/q} |\omega| |\omega_{\theta}| \right)\,d\theta\\
	&= \log 2 \int_0^{2\pi}
		|\omega(r_n, \theta)|^{q-2}
		\left( r_n^2 |\omega(r_n,\theta)|^2
				+ r_n^{1+1/q} |\omega(r_n, \theta)| |\omega_{\theta}(r_n, \theta)| \right)\,d\theta.
\end{align*}%
Therefore, we have
\begin{align}%
\label{eq_om_rn}
	& \int_0^{2\pi}
		|\omega(r_n, \theta)|^{q-2}
		\left( r_n^2 |\omega(r_n,\theta)|^2
				+ r_n^{1+1/q} |\omega(r_n, \theta)| |\omega_{\theta}(r_n, \theta)| \right)\,d\theta \\
\notag
	&\le C \int_{r > 2^n}|\omega|^{q-2} \left( |\omega|^2 + r^{2/q} |\nabla \omega|^2 \right)\,dx.
\end{align}%
On the other hand, 
integrating the identity
\begin{align*}%
	|\omega(r_n, \theta)|^q - |\omega(r_n, \varphi)|^q
		=\int_{\varphi}^{\theta}  \frac{\partial}{\partial \theta'} |\omega(r_n, \theta')|^q \,d\theta'
\end{align*}%
with respect to
$\varphi \in [0,2\pi]$,
we have
\begin{align*}%
	2\pi |\omega(r_n, \theta)|^q - \int_0^{2\pi} |\omega(r_n, \varphi)|^q \,d\varphi
	&\le 2\pi \int_0^{2\pi} \left| \frac{\partial}{\partial \theta'} |\omega(r_n, \theta')|^q \right|\,d\theta' \\
	&\le C \int_0^{2\pi} |\omega(r_n, \theta')|^{q-1} |\omega_{\theta}(r_n, \theta')| \,d\theta'.
\end{align*}%
Multiplying it by $r_n^{1+\frac 1q}$ with $1 + \frac 1q < \frac 32$, implied by 
$q \in (2, \infty)$, we have by \eqref{eq_om_rn} that 
\begin{align*}%
	r_n^{1+1/q} |\omega(r_n, \theta)|^q
	&\le C r_n^{1+1/q} \int_0^{2\pi} |\omega(r_n, \varphi)|^q \,d\varphi \\
	&\quad + C r_n^{1+1/q} \int_0^{2\pi} |\omega(r_n, \theta')|^{q-1} |\omega_{\theta}(r_n, \theta')| \,d\theta'\\
	&\le C \int_{r > 2^n}|\omega|^{q-2} \left( |\omega|^2 + r^{2/q} |\nabla \omega|^2 \right)\,dx.
\end{align*}%
By Lemma \ref{lem_lq2}, the right-hand side tends to $0$ as $n\to \infty$.
Hence, we have
\begin{align}%
\label{eq_limrn}
	\lim_{n \to \infty} r_n^{1+1/q} \sup_{\theta \in (0, 2\pi)} |\omega(r_n, \theta)|^q = 0.
\end{align}%
By noting
$r_{n+1} < 4r_n$
and the maximum principle,
we have for
$r \in (r_n, r_{n+1})$
that
\begin{align*}%
	r^{1+1/q} \sup_{\theta \in (0, 2\pi)} | \omega(r,\theta)|^q
		&\le r_{n+1}^{1+1/q} \max\{
				\sup_{\theta \in (0, 2\pi)}| \omega(r_n,\theta)|^q,
				\sup_{\theta \in (0, 2\pi)} | \omega(r_{n+1},\theta)|^q
					\} \\
		&\le \max\{
				8r_n^{1+1/q} \sup_{\theta \in (0, 2\pi)} | \omega(r_n,\theta)|^q,
				r_{n+1}^{1+1/q} \sup_{\theta \in (0, 2\pi)} | \omega(r_{n+1},\theta)|^q
				\}.
\end{align*}%
Combining this with \eqref{eq_limrn} yields
\begin{align*}%
	\lim_{r \to \infty} r^{1+1/q} \sup_{\theta \in (0, 2\pi)} |\omega(r, \theta)|^q = 0,
\end{align*}%
which completes the proof.
\end{proof}

Next, we give the proof of Corollary \ref{cor_liouville1}.
\begin{proof}[Proof of Corollary \ref{cor_liouville1}]
We first note that,
by the Calder\'{o}n--Zygmund inequality,
$\omega \in L^q(\mathbb{R}^2)$
implies
$\nabla v \in L^q(\mathbb{R}^2)$,
and hence, we may assume
$\nabla v \in L^q(\mathbb{R}^2)$.
Then, by Theorem \ref{thm1}, we have
$\omega \to 0$
as
$|x| \to \infty$.
Since
$\omega$
satisfies the maximum principle,
we have
$\omega \equiv 0$.
\end{proof}

\subsection{Proof of the estimate of the derivative of the velocity}
Next, we prove \eqref{estnablav}.
We first prepare the following H\"{o}lder estimate of the vorticity.
\begin{lemma}\label{lem_holder}
Under the same assumption of Theorem \ref{thm1},
for any
$R > \max\{ r_0, 4\}$,
we have
\begin{align}%
\label{hol}
	| \omega(x_1) - \omega(x_2) | \le C \mu(R) |x_1 - x_2|^{1/2}
\end{align}%
for
$|x_1|, |x_2| \in (R+2, 3R-2)$ satisfying $|x_1-x_2| \le 1$,
where
$\mu(R)$
is defined by
\begin{align}%
\label{mu}
	\mu(R) = \sup_{R \le r \le 3R} |\omega(r,\theta)| \left\{ 1+ |v(r,\theta)|^{1/2} \right\}.
\end{align}%
\end{lemma}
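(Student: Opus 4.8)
The plan is to treat the vorticity equation as a divergence-form elliptic equation and to localize and rescale it. Since $\diver v = 0$, the equation $\Delta \omega - v\cdot\nabla\omega = 0$ can be written as $\Delta\omega = \diver(v\omega)$ on $\{r>r_0\}$, i.e.\ a second-order elliptic equation whose right-hand side is the divergence of the field $v\omega$, which is bounded on the compact annulus $\{R\le r\le 3R\}$ because $v$ and $\omega$ are smooth there and $R>r_0$ (Lemma \ref{lem_lq1} even gives quantitative control of $v$). For $x_0$ with $|x_0|\in(R+2,3R-2)$ the ball $B_2(x_0)$ is contained in $\{R<r<3R\}$, since $|y-x_0|\le 2$ forces $|y|\in(|x_0|-2,|x_0|+2)\subset(R,3R)$; this is exactly what the hypotheses $|x_1|,|x_2|\in(R+2,3R-2)$ and $R>\max\{r_0,4\}$ secure, and $\{R<r<3R\}$ is the region on which the weight $\mu(R)$ in \eqref{mu} controls $|\omega|(1+|v|^{1/2})$.

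The key point — the source of both the exponent $1/2$ and the half power $|v|^{1/2}$ in \eqref{hol} — is a rescaling that balances diffusion against transport. Fixing $x_0$ (the midpoint of $x_1,x_2$), I would set $\rho = (1+\sup_{B_2(x_0)}|v|)^{-1}\le 1$ and pass to $\tilde\omega(\xi)=\omega(x_0+\rho\xi)$ for $\xi\in B_2(0)$. Then $\tilde\omega$ solves $\Delta_\xi\tilde\omega=\diver_\xi(\tilde b\,\tilde\omega)$ with $\tilde b(\xi)=\rho\, v(x_0+\rho\xi)$ and $|\tilde b|\le 1$, so on the rescaled, unit-scale configuration the drift is normalized. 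On that fixed configuration I would prove the interior estimate $[\tilde\omega]_{C^{1/2}(B_1)}\le C(\|\tilde b\,\tilde\omega\|_{L^\infty(B_2)}+\|\tilde\omega\|_{L^\infty(B_2)})$ with a universal constant $C$.

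To obtain this interior estimate I would use the Newtonian potential. Cutting off by $\zeta\in C_0^\infty(B_2)$ with $\zeta\equiv 1$ on $B_1$, one writes $\Delta(\zeta\tilde\omega)=\diver F+g$ with $F=\zeta\tilde b\tilde\omega+2\tilde\omega\nabla\zeta$ and $g=-\nabla\zeta\cdot\tilde b\tilde\omega-\tilde\omega\Delta\zeta$, so that $\zeta\tilde\omega=\nabla\Gamma * F+\Gamma * g$ with $\Gamma(z)=\frac{1}{2\pi}\log|z|$. For $\xi_1,\xi_2\in B_1$ with $t=|\xi_1-\xi_2|$ one estimates the kernel differences by splitting into $\{|y-\xi_1|\le 2t\}$, where each singular term is integrated directly, and its complement, where the mean value theorem gives $|\nabla\Gamma(\xi_1-y)-\nabla\Gamma(\xi_2-y)|\le C t/|\xi_1-y|^2$; this produces a modulus $C(\|F\|_{L^\infty}+\|g\|_{L^\infty})\,t\log(1/t)$, and $t\log(1/t)\le C t^{1/2}$ for $t\le 1$ yields the $C^{1/2}$ bound. (De Giorgi--Nash--Moser for divergence-form equations with bounded coefficients would give $[\tilde\omega]_{C^\alpha}$ for some $\alpha>0$ directly, which also suffices since $|x_1-x_2|\le 1$.)

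Finally I would scale back. Since $|\xi_1-\xi_2|=|x_1-x_2|/\rho$, the estimate becomes $|\omega(x_1)-\omega(x_2)|\le C\rho^{-1/2}(\|\tilde b\tilde\omega\|_{L^\infty}+\|\tilde\omega\|_{L^\infty})|x_1-x_2|^{1/2}$, and with $\|\tilde b\tilde\omega\|_{L^\infty}\le \rho\sup_{B_{2\rho}(x_0)}|v\omega|$ this is $C(\rho^{1/2}\sup|v\omega|+\rho^{-1/2}\sup|\omega|)|x_1-x_2|^{1/2}$. The choice $\rho\sim(1+|v|)^{-1}$ is precisely the one that balances these two terms and turns the prefactor into $\sup_{B_2(x_0)}|\omega|(1+|v|^{1/2})\le\mu(R)$. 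I expect the main obstacle to be exactly this last bookkeeping: arranging the localization radius so that the transport and diffusion contributions combine into the sharp half power of $|v|$ present in $\mu(R)$ — rather than the cruder first power that a fixed-scale potential estimate would produce — and checking that the local suprema over $B_2(x_0)$ are dominated by $\mu(R)$. The kernel-difference estimates and the geometric containment $B_2(x_0)\subset\{R<r<3R\}$ are then routine.
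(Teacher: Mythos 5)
Your route is genuinely different from the paper's: the paper never touches potential theory, but instead controls the local Dirichlet integral $D(r)=\int_{B_r(x_0)}|\nabla\omega|^2\,dx$, proving $D(1)\le C\mu(R)^2$ from the energy identity \eqref{eq_h_en} with $h(\omega)=|\omega|^2$, then the Dirichlet growth bound $D(r)\le C\mu(R)^2r$ from the differential inequality $D(r)\le\frac{r}{2}D'(r)+C\mu(R)^2r$ (obtained via Wirtinger's inequality on circles), and finally invoking Morrey's lemma. Your rescaling $\rho\sim(1+\sup_{B_2(x_0)}|v|)^{-1}$ followed by a unit-scale interior estimate for $\Delta\omega=\diver(v\omega)$ is a legitimate alternative strategy, and it does explain both the exponent $1/2$ and the half power of $|v|$. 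The representation formula, the kernel splitting, and the bound $t\log(1/t)\le Ct^{1/2}$ are all fine. The problem is precisely the step you defer as ``bookkeeping'': it is not bookkeeping, and as you state it, it is false.

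After scaling back, your prefactor contains the term $\rho^{-1/2}\sup_{B_{2\rho}(x_0)}|\omega|=\bigl(1+\sup_{B_2(x_0)}|v|\bigr)^{1/2}\sup_{B_{2\rho}(x_0)}|\omega|$, a \emph{product of suprema} possibly attained at different points, whereas $\mu(R)$ in \eqref{mu} is the \emph{supremum of the product} $|\omega|\{1+|v|^{1/2}\}$. These are not comparable with an absolute constant: if $|\omega|\approx 1$ at points where $|v|\approx 0$ while $|\omega|\approx M^{-1/2}$ at points where $|v|\approx M$, then $\mu(R)=O(1)$ but your prefactor is of order $M^{1/2}$. (Your first term $\rho^{1/2}\sup|v\omega|$ is genuinely bounded by $\mu(R)$; only this second term fails.) The same defect blocks the pairs with $2\rho<|x_1-x_2|\le 1$, which your rescaled estimate on $B_1$ does not reach at all: there one must fall back on $|\omega(x_1)-\omega(x_2)|\le 2\sup|\omega|\le C\mu(R)(2\rho)^{1/2}$, which is again exactly the disputed inequality. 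The gap is repairable, but it needs an ingredient you never invoke: since $\nabla v\in L^q$ with $q>2$, Morrey's embedding on the fixed-size ball $B_2(x_0)$ gives $|v(y)-v(y')|\le C\|\nabla v\|_{L^q}|y-y'|^{1-2/q}\le C'$, so $v$ has uniformly bounded oscillation at unit scale and hence $1+\sup_{B_2(x_0)}|v|\le C''\bigl(1+|v(y)|\bigr)$ for every $y\in B_2(x_0)$; with this, the product of sups is dominated by $C''\mu(R)$, where $C''$ now depends on $\|\nabla v\|_{L^q}$ (harmless for the application, which only needs independence of $R$). A minor slip in the same vein: centering at the midpoint $x_0$ of $x_1,x_2$ only guarantees $|x_0|>R+3/2$, so $B_2(x_0)$ need not lie in $\{R<r<3R\}$; center at $x_1$ instead (or shrink the ball to radius $3/2$).
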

\begin{remark}
In \cite[Lemma 6.1]{GiWe78}, the same estimate as \eqref{hol} is given with
$\mu(R) = \sup_{r \ge R} |\omega(r,\theta)| \left\{ 1+ |v(r,\theta)|^{1/2} \right\}$.
To control the velocity $v$, which may polynomially grow (see \eqref{estv}),
we slightly modify the definition of $\mu(R)$ of \cite[Lemma 6.1]{GiWe78} in the way above.
\end{remark}
\begin{proof}[Proof of Lemma \ref{lem_holder}]
The proof is the almost same as that of \cite[Lemma 6.1]{GiWe78},
however, we give a proof for reader's convenience.
Let $x_0 \in \re^2$ be $|x_0| \in (R+2, 3R-2)$ and
we consider the Dirichlet integral
\begin{align*}%
	D(r) = \int_{B_{r}(x_0)} |\nabla \omega(x)|^2 \,dx
\end{align*}%
for $r \in (0,1]$.
First, we claim that
\begin{align}%
\label{D1}
	D(1) \le C \mu(R)^2
\end{align}%
holds with an absolute constant $C>0$.
In \eqref{eq_h_en}, taking
$h(\omega) = |\omega|^2$
and replacing
$\eta_R$
by a cut-off function
$\eta \in C_0^{\infty}(B_2(x_0))$
such that
$\eta(x) = 1$ for $x \in B_1(x_0)$,
we deduce
\begin{align}%
\label{est_D1}
	D(1) &\le \int_{B_{2}(x_0)} \eta(x) |\nabla \omega(x)|^2 \,dx \\
\nonumber
		&= \frac{1}{2}
			\int_{B_2(x_0)} |\omega(x)|^2 (\Delta \eta(x) + v \cdot \nabla \eta(x) )\,dx \\
\nonumber
		&\le C \mu(R)^2.
\end{align}%

Next, we prove the Dirichlet growth condition
\begin{align}%
\label{est_Dr}
	D(r) \le C \mu(R)^2 r
\end{align}%
for $r \in (0,1)$
with an absolute constant $C>0$.
This and Morrey's lemma \cite[Theorem 7.19, Lemma 12.2]{GiTr} give the desired estimate \eqref{hol}.
To prove \eqref{est_Dr}, we put
\begin{align*}%
	\overline{\omega}(r)
	= \frac{1}{2\pi r} \int_{\partial B_r(x_0)} \omega \,ds
	= \frac{1}{2\pi} \int_0^{2\pi}
			\omega(x_{0} + r (\cos \theta, \sin \theta)) \,d\theta.
\end{align*}%
By integration by parts,
the vorticity equation
$\Delta \omega - v \cdot \nabla \omega = 0$,
and the Wirtinger inequality,
we have
\begin{align*}%
	D(r)
		&= \int_{\partial B_r(x_0)} \omega \nu \cdot \nabla \omega \,ds
			- \int_{B_r(x_0)} \omega v \cdot \nabla \omega \,dx \\
		&= \int_{\partial B_r(x_0)} \omega \nu \cdot \nabla \omega \,ds
			- \frac{1}{2} \int_{\partial B_r(x_0)} \omega ^2 \nu \cdot v \,ds \\
		&= \int_{\partial B_r(x_0)} (\omega-\overline{\omega}) \nu \cdot \nabla \omega \,ds
			+ \overline{\omega} \int_{\partial B_r(x_0)} \nu \cdot \nabla \omega \,ds
			- \frac{1}{2} \int_{\partial B_r(x_0)} \omega ^2 \nu \cdot v \,ds \\
		&\le \frac{r}{2} \int_0^{2\pi} \left[ \frac{|\omega - \overline{\omega}|^2}{r^2}
			+ |\partial_r \omega|^2 \right]_{x=x_{0} + r (\cos \theta, \sin \theta)}
				r \,d\theta
			+ \overline{\omega} \int_{B_r(x_0)} \Delta \omega \,dx
			+ C \mu(R)^2 r \\
		&\le  \frac{r}{2} \int_0^{2\pi} \left[ \frac{| \partial_{\theta} \omega |^2}{r^2}
			+ |\partial_r \omega|^2 \right]_{x=x_{0} + r (\cos \theta, \sin \theta)}
				 r \,d\theta
			+ \overline{\omega} \int_{B_r(x_0)} \Delta \omega \,dx
			+ C \mu(R)^2 r
			\\
		&\le \frac{r}{2} \int_0^{2\pi} |(\nabla \omega)(x_0 + r(\cos \theta, \sin \theta))|^2 r \,d\theta
			+ \overline{\omega} \int_{B_r(x_0)} v \cdot \omega \,dx
			+ C \mu(R)^2 r \\
		&\le \frac{r}{2} D'(r) + C\mu(R)^2 r,
\end{align*}%
where
$\nu$
is the unit outward normal vector of $\partial B_r(x_0)$.
We rewrite the above as
\begin{align*}%
	\frac{d}{dr} \left( \frac{D(r)}{r^2} \right) \ge - C \mu(R)^2 r^{-2},
\end{align*}%
and integrate it over $[r,1]$, and use \eqref{est_D1} to obtain
\begin{align*}%
	\frac{D(r)}{r^2} \le D(1) + C\mu(R)^2 \left(\frac{1}{r}-1 \right) \le C\mu(R)^2 \frac{1}{r},
\end{align*}%
which gives the Dirichlet growth condition \eqref{est_Dr}.
\end{proof}
\begin{proof}[Proof of \eqref{estnablav}]
To prove \eqref{estnablav},
we follow the argument of \cite[Theorem 7]{GiWe78} and
identify $x=(x_1, x_2) \in \mathbb{R}^2$ and
$z = x_1+ ix_2 \in \mathbb{C}$.
We put
$f(z) = v_1(z) - i v_2(z)$
and
$\partial_z = \frac{1}{2}( \partial_{x_1} - i \partial_{x_2})$,
$\partial_{\bar{z}} = \frac{1}{2}(\partial_{x_1} + i \partial_{x_2})$.
Then, we easily obtain
\begin{align*}%
	\partial_{\bar{z}} f(z) = -\frac{i}{2} \omega(z)
\end{align*}%
and
$|\partial_z f(z)|^2 + |\partial_{\bar{z}}f(z)|^2 = \frac{1}{2}|\nabla v(x)|^2$.
Therefore, by noting \eqref{estomegav},
it suffices to show
\begin{align}%
\label{fz}
	|\partial_z f(z)| = o ( |z|^{-\frac{1}{q}-\frac{1}{q^2}} \log |z| ) \quad ( |z| \to \infty).
\end{align}%
Let
$z_0 \in \mathbb{C}$
satisfy
$|z_0| > \max\{ r_0, 2 \}$
and let
$R= |z_0|/2$. 
The Cauchy integral formula \cite[Theorem 1.2.1]{Hor} implies
\begin{align*}%
	f(z) &= \frac{1}{2\pi i} \left\{ \int_{\partial B_R(z_0)} \frac{f(\zeta)}{\zeta - z} \,d\zeta
						+ \int_{B_R(z_0)} \frac{\partial_{\bar{z}}f(\zeta)}{\zeta - z}
							\,d\zeta \wedge d\bar{\zeta} \right\} \\
	&=  \frac{1}{2\pi i} \left\{ \int_{\partial B_R(z_0)} \frac{f(\zeta)}{\zeta - z} \,d\zeta
			+ \int_{B_R(z_0)} \frac{\omega(\zeta)}{\zeta - z} \,d\xi d\eta \right\} \\
	&=  \frac{1}{2\pi i} \left\{ 
			\int_{\partial B_R(z_0)} \frac{f(\zeta)}{\zeta - z} \,d\zeta
			+ \int_{B_R(z_0)} \frac{\omega(\zeta) - \omega(z_0)}{\zeta - z} \,d\xi d\eta
			+ \omega(z_0) \int_{B_R(z_0)} \frac{d\xi d\eta}{\zeta - z}
		\right\}
\end{align*}%
for $z \in B_R(z_0)$,
where we denote
$\zeta = \xi + i \eta$
and use
$d \zeta \wedge d \bar{\zeta} = -2 i d\xi \wedge d\eta$.
By the Cauchy integral formula again,
the last term is computed as
\begin{align*}%
	\int_{B_R(z_0)} \frac{d\xi d\eta}{\zeta - z}
	&= \frac{i}{2} \int_{B_R(z_0)} \frac{\partial_{\bar{\zeta}}(\bar{\zeta} - \bar{z_0})}{\zeta - z}
		\,d\zeta \wedge d\bar{\zeta} \\
	&= -\pi (\bar{z} - \bar{z_0} ) 
		- \frac{i}{2} \int_{\partial B_R(z_0)} 
			\frac{\bar{\zeta} - \bar{z_0}}{\zeta - z} \,d\zeta.
\end{align*}%
Here, we claim that the last term of the right-hand side vanishes,
because letting
$\zeta = z_0 + Re^{i\theta}$ and $w = (z-z_0)/R$, we compute
\begin{align*}
	\int_{\partial B_R(z_0)} 
			\frac{\bar{\zeta} - \bar{z_0}}{\zeta - z} \,d\zeta
	= iR \int_0^{2\pi} \frac{d\theta}{e^{i\theta} - w} =: F(w)
\end{align*}
and we easily see that
$F(w)$ is a holomorphic function of $w$ on $|w|<1$
with $\partial_w^n F(0) = 0$
for all $n \in \mathbb{N}\cup \{0\}$, that is, $F(w) \equiv 0$ for $|w|<1$.
Hence, the above expression of $f(z)$ is differentiable at $z=z_0$
and we have
\begin{align*}%
	\partial_z f(z_0)
		&= \frac{1}{2\pi i} \left\{ 
			\int_{\partial B_R(z_0)} \frac{f(\zeta)}{(\zeta - z_0)^2} \,d\zeta
			+ \int_{B_R(z_0)} \frac{\omega(\zeta) - \omega(z_0)}{(\zeta - z_0)^2} \,d\xi d\eta
		\right\}.
\end{align*}%
By using \eqref{estv}, the first term of the right-hand side is estimated as
\begin{align*}%
	\left| \int_{\partial B_R(z_0)} \frac{f(\zeta)}{(\zeta - z_0)^2} \,d\zeta \right|
		&\le C R^{-2/q}.
\end{align*}%
By Lemma \ref{lem_holder}, for $\delta \in (0,1)$, the second term is estimated as
\begin{align*}%
	&\left| \int_{B_R(z_0)} \frac{\omega(\zeta) - \omega(z_0)}{(\zeta - z_0)^2} \,d\xi d\eta \right| \\
		&\le
			\left( \int_{B_1(z_0)} + \int_{B_R(z_0)\setminus B_1(z_0)} \right)
				\frac{|\omega(\zeta) - \omega(z_0)|}{|\zeta - z_0|^2} \,d\xi d\eta \\
		&\le C \mu(R) \int_0^{2\pi} \int_0^{\delta} \frac{\rho^{1/2}}{\rho^2} \rho \,d\rho d\theta
			+ C \sup_{|\zeta| > R} |\omega(\zeta)|
				\int_0^{2\pi} \int_{\delta}^R \frac{1}{\rho^2} \rho \,d\rho d\theta \\
		&\le C ( \mu(R) \delta^{1/2}
			+ \log \frac{R}{\delta} \sup_{|\zeta| > R} |\omega(\zeta)| ).
\end{align*}%
Finally, applying \eqref{estv} and \eqref{estomegav} to $\mu(R)$,
and optimizing the above estimate by choosing $\delta = R^{-1+2/q}$,
we conclude
\begin{align*}%
		&\mu(R) \delta^{1/2} 
			+ \log \frac{R}{\delta} \sup_{|\zeta| > R} |\omega(\zeta)| \\
		& = o(R^{-\frac{1}{q} - \frac{1}{q^2} + \frac{1}{2}\left(1-\frac{2}{q} \right)}
					\cdot R^{-\frac{1}{2} \left(1-\frac{2}{q} \right)} )
			+ o( (\log R) \cdot R^{-\frac{1}{q} - \frac{1}{q^2}} ) \\
		& = o(R^{-\frac{1}{q} - \frac{1}{q^2}} \log R)
		\quad (R\to \infty),
\end{align*}%
which gives \eqref{fz}.
\end{proof}

\section{Proof of Theorem \ref{thm_energy_id} and Corollary \ref{cor_liouville}}
In this section, we give the proof of Theorem \ref{thm_energy_id} and Corollary \ref{cor_liouville}.
The idea of the proof is using the truncating function
$h(\omega)$
to control the nonlinear term
under the assumption
$\omega \in L^q( \mathbb{R}^2 \times I )$.

\begin{proof}[Proof of Theorem \ref{thm_energy_id}]
Similarly to the proof of Theorem \ref{thm1},
let 
$\omega_* > 0$
be a constant, and define a function
$h(\omega)$
by
\begin{align*}
	h(\omega)
	= \begin{cases}
		|\omega|^q &(|\omega| \le \omega_*),\\
		\omega_*^{q-1} ( q|\omega| - (q-1) \omega_* )&(|\omega|\ge \omega_*).
	\end{cases}
\end{align*}
We also take a function
$\psi \in C_0^{\infty}(\mathbb{R}^2)$
so that
\begin{align*}%
	\psi (x) = \begin{cases} 1 &(|x|\le 1),\\ 0&(|x| \ge 2) \end{cases}
\end{align*}%
holds, and for
$R>0$,
we define
$\psi_R(x) = \psi(\frac{x}{R})$.
Then, noting
$\diver \omega (x,t) = 0$,
we have the following identity in the same way to the proof of Lemma \ref{lem_lq2}:
\begin{align*}%
	&\diver\left[
		\psi_R \nabla h - h \nabla \psi_R - \psi_R h v \right] \\
		&= \psi_R h'' |\nabla \omega|^2
			-h \left[ \Delta \psi_R + v \cdot \nabla \psi_R \right]
			+ \psi_R h' \left[ \Delta \omega - v \cdot \nabla \omega \right] \\
		&= \psi_R h'' |\nabla \omega|^2
			-h \left[ \Delta \psi_R + v \cdot \nabla \psi_R \right]
			+ \psi_R h' \omega_t.
\end{align*}%
Using
$\psi_R h' \omega_t = \frac{d}{dt} [\psi_R h(\omega) ]$
and integrating the above identity over
$\mathbb{R}^2$,
we deduce
\begin{align*}%
	\frac{d}{dt} \int_{\mathbb{R}^2} \psi_R h(\omega) \,dx
		+ \int_{\mathbb{R}^2} \psi_R h''(\omega) |\nabla \omega|^2 \,dx
	&= \int_{\mathbb{R}^2} h(\omega) (\Delta \psi_R + v \cdot \nabla \psi_R) \,dx.
\end{align*}%
Moreover, integrating it over 
$[t',t]$
yields
\begin{align*}%
	&\int_{\mathbb{R}^2} \psi_R h(\omega(x,t)) \,dx
		+ \int_{t'}^t \int_{\mathbb{R}^2}
				\psi_R h''(\omega(x,\tau)) |\nabla \omega(x,\tau)|^2 \,dx d\tau \\
	&=  \int_{\mathbb{R}^2} \psi_R h(\omega(x,t')) \,dx
		+ \int_{t'}^t \int_{\mathbb{R}^2}
				h(\omega(x,\tau)) (\Delta \psi_R + v(x,\tau) \cdot \nabla \psi_R) \,dx d\tau.
\end{align*}%
From
$h''(\omega) = q(q-1) |\omega|^{q-2} \ (|\omega| < \omega_*)$
and
$h''(\omega) = 0 \ (|\omega| > \omega_*)$,
we further obtain
\begin{align}%
\label{eq_om1}
	&\int_{\mathbb{R}^2} \psi_R h(\omega(x,t)) \,dx
		+ q(q-1) \int_{t'}^t
			\int_{\{ x\in \re^2 ; |\omega(x,\tau)|<\omega_* \}}
				\psi_R |\omega(x,\tau)|^{q-2} |\nabla \omega(x,\tau)|^2 \,dx d\tau \\
\notag
	&=   \int_{\mathbb{R}^2} \psi_R h(\omega(x,t')) \,dx
		+ \int_{t'} ^t \int_{B_{2R}\setminus B_R}
			h(\omega(x,\tau)) \left( \Delta \psi_R + v(x,\tau) \cdot \nabla \psi_R \right) \,dx d\tau.
\end{align}%
Here,
$B_{R}$
denotes the open disc centered at the origin with radius $R$.
Now, we estimate the right-hand side.
By an interpolation argument, we first note that
$h(\omega) \le C \omega_*^{q-s} |\omega|^s \quad (1\le s \le q)$
holds.
From this and
$\omega \in  L^q(\mathbb{R}^2\times I)$,
we obtain
\begin{align*}%
	\int_{t'}^t \int_{B_{2R}\setminus B_R} h(\omega) |\Delta\psi_R| \,dxd\tau
	&\le C R^{-2} \int_{t'}^t \| \omega (\cdot, \tau) \|_{L^q(B_{2R}\setminus B_R)}^q \,d\tau
	\to 0 \quad (R\to \infty).
\end{align*}%
Next, we estimate
\begin{align*}%
	\int_{t'}^t \int_{B_{2R}\setminus B_R} h(\omega) | v \cdot \nabla \psi_R |\,dxd\tau
	&\le C \omega_* \int_{t'}^t \int_{B_{2R} \setminus B_R}
			|\omega|^{q-1} |\nabla \psi_R| |v-v(0,\tau)| \,dxd\tau\\
	&\quad + \int_{t'}^t \int_{B_{2R}\setminus B_R}
				|\omega|^q |\nabla \psi_R | |v(0,\tau)| \,dxd\tau \\
	&=: J_1 + J_2
\end{align*}%
and easily see
\begin{align*}%
	J_2
	&\le C R^{-1} \left( \sup_{t' < \tau < t} |v(0,\tau)| \right)
	\int_{t'}^t \| \omega(\cdot,\tau) \|_{L^q(B_{2R}\setminus B_R)}^q \,d\tau
		\to 0 \quad (R\to \infty).
\end{align*}%
For the term $J_1$,
we note that, by the Calder\'{o}n--Zygmund inequality,
$\omega (\cdot, t) \in L^q(\mathbb{R}^2)$ (a.e. $t \in I$)
implies
$\nabla v (\cdot, t) \in L^q(\mathbb{R}^2)$ (a.e. $t \in I$),
and the Sobolev embedding leads to
$v(\cdot, t) \in \dot{C}^{0,1-2/q}(\mathbb{R}^2)$ (a.e. $t \in I$).
Namely, we have
\begin{align*}%
	|v(x,\tau) - v(0, \tau) | \le C |x|^{1-\frac{2}{q}} \| \omega (\cdot, \tau) \|_{L^q(\mathbb{R}^2)}
	\quad ({\rm a.e.}\, \tau \in (t',t)).
\end{align*}%
Using this, we estimate
\begin{align*}%
	J_1 &\le C\omega_* R^{-1}
		\int_{t'}^t
			\left( \int_{B_{2R}\setminus B_R} |\omega (x,\tau)|^q \,dx \right)^{\frac{1}{q'}}
			\left( \int_{B_{2R}\setminus B_R} | v(x,\tau) - v(0,\tau) |^q \,dx \right)^{\frac{1}{q}} \,d\tau\\
	&\le C \omega_* R^{-1} \int_{t'}^t
		\| \omega (\cdot, \tau) \|_{L^q(B_{2R}\setminus B_R)}^{\frac{q}{q'}}
			\left( \int_{B_{2R}\setminus B_R} \left| (2R)^{1-\frac{2}{q}} \| \omega(\cdot,\tau) \|_{L^q(\mathbb{R}^2)}
				\right|^q \,dx \right)^{\frac{1}{q}} \,d\tau \\
	&\le C \omega_* \int_{t'}^t
		\| \omega (\cdot, \tau) \|_{L^q(B_{2R}\setminus B_R)}^{q-1}
		\| \omega(\cdot,\tau) \|_{L^q(\mathbb{R}^2)} \,d\tau.
\end{align*}%
The right-hand side tends to
$0$
as
$R\to \infty$
due to
$\omega \in L^q(\mathbb{R}^2\times I)$
and the Lebesgue dominated convergence theorem.
Therefore, letting $R\to \infty$ in \eqref{eq_om1}, we conclude
\begin{align*}%
	&\int_{\mathbb{R}^2} h(\omega(x,t)) \,dx
		+ q(q-1) \int_{t'}^t \int_{\{ x\in \re^2 ; |\omega(x,\tau)|<\omega_* \}}
			|\omega|^{q-2} |\nabla \omega|^2 \,dxd\tau \\
		&= \int_{\mathbb{R}^2} h(\omega(x,t'))\,dx.
\end{align*}%
Finally,
by the monotone convergence theorem,
taking the limit
$\omega_* \to \infty$
implies
\begin{align*}%
	&\int_{\mathbb{R}^2} |\omega(x,t)|^q \,dx
	+ q(q-1) \int_{t'}^t \int_{\mathbb{R}^2}
		|\omega(x,\tau)|^{q-2} |\nabla \omega(x,\tau)|^2 \,dxd\tau \\
	&= \int_{\mathbb{R}^2} |\omega(x,t')|^q \,dx,
\end{align*}%
which completes the proof of Theorem \ref{thm_energy_id}.
\end{proof}
Corollary \ref{cor_liouville} (i) is easily obtained by
the $L^q$-energy identity in Theorem \ref{thm_energy_id},
the condition
$\omega(x,0) = 0$
implies
\begin{align*}%
	\int_{\mathbb{R}^2} |\omega(x,t)|^q \,dx = 0
\end{align*}%
for any
$t\in [0,T]$,
namely,
$\omega \equiv 0$.

For Corollary \ref{cor_liouville} (ii),
by Theorem \ref{thm_energy_id},
for any $t' < t < 0$ we have
\begin{align}
\label{om_lq}
	\| \omega(t) \|_{L^q(\re^2)} \le \| \omega (t') \|_{L^q(\re^2)}.
\end{align}
Since $\omega \in L^q(\re^2 \times I)$,
we have
$\lim_{n\to \infty} \| \omega(t_n) \|_{L^q(\re^2)} = 0$
along with some sequence
$\{ t_n \}_{n=1}^{\infty}$ satisfying $t_n \to - \infty$ as $n\to \infty$.
Thus, taking $t' = t_n$ in \eqref{om_lq} and letting $n\to \infty$,
we conclude
$\| \omega (t) \|_{L^q(\re^2)} = 0$
for any $t \in I$.

\appendix
\section{Alternate proof of Corollary \ref{cor_liouville1}}
In the appendix, we give an alternate proof of Corollary \ref{cor_liouville1}.
Applying the integration by parts to the vorticity equation
\begin{align*}%
	-\Delta \omega + v \cdot \nabla \omega = 0,
\end{align*}%
we have
\begin{align}%
\label{vor_ene_id}
	\int_{\re^2} \nabla \omega \cdot \nabla \varphi \,dx
		+ \int_{\re^2} v \cdot \nabla \omega \varphi \,dx = 0
\end{align}%
for any
$\varphi \in C_0^1(\re^2)$.
Let
$\psi \in C_0^{\infty}(\re^2)$
be a test function satisfying
\begin{align*}%
	\psi (x) = \begin{cases}
		1,&|x|\le 1,\\
		0,&|x| \ge 2
		\end{cases}
\end{align*}%
and $0 \le \psi \le 1$.
Let $R>0$ be a parameter and we define
$\psi_R(x) = \psi (x/R)$.
Taking
$\varphi(x) = |\omega|^{q-2} \omega \psi_R$
in \eqref{vor_ene_id}, we have
\begin{align}%
\label{vor_ene_id2}
	(q-1) \int_{\re^2} |\nabla \omega|^2 |\omega|^{q-2} \psi_R \,dx
	&=
		- \int_{\re^2} |\omega|^{q-2} \omega \nabla \omega \cdot \nabla (\psi_R) \,dx \\
\notag
	&\quad 
		- \int_{\re^2} (v \cdot \nabla \omega) |\omega|^{q-2} \omega \psi_R\,dx\\
\notag
	&=: I + I\!I.
\end{align}%
We estimate $I$ as
\begin{align}%
\label{I}
	I &= - \frac{1}{q} \int_{\re^2} \nabla ( |\omega|^{q}) \cdot \nabla (\psi_R) \,dx\\
\notag
	&= \frac{1}{q} \int_{\re^2} | \omega |^{q} \Delta (\psi_R) \,dx\\
\notag
	&\le C(q) R^{-2} \int_{B_R\setminus B_{R/2}} | \omega |^{q} \,dx\\
\notag
	&= C(q) R^{-2} \| \omega \|_{L^q(B_R\setminus B_{R/2})}^q.
\end{align}%
Next, we estimate $I\!I$.
When
$\omega \in L^q(\re^2)$,
by using the Calder\'{o}n--Zygmund inequality,
we may assume
$\nabla v \in L^q(\re^2)$.
This and $q>2$ implies that
$v \in \dot{C}^{0,1-2/q}(\re^2)$ and
\begin{align}%
\label{hol_conti}
	| v(x) - v(y) | \le C(q) \| \omega \|_{L^q(\re^2)} | x - y|^{1-2/q}.
\end{align}%
In view of this, we compute
\begin{align}%
\label{II}
	I\!I &=
		- \int_{\re^2} (v \cdot \nabla \omega) |\omega|^{q-2} \omega \psi_R\,dx \\
\notag
		&= - \frac{1}{q} \int_{\re^2} v \cdot \nabla ( | \omega |^{q} ) \psi_R \,dx\\
\notag
		&= \frac{1}{q} \int_{\re^2} v \cdot \nabla (\psi_R) | \omega |^{q} \,dx\\
\notag
		&= \frac{1}{q} \int_{\re^2} ( v(x) - v(0) ) \cdot \nabla (\psi_R) | \omega |^{q} \,dx\\
\notag
		&\quad + \frac{1}{q} \int_{\re^2} v(0) \cdot \nabla (\psi_R) | \omega |^{q} \,dx\\
\notag
		&=: I\!I\!I + IV.
\end{align}%
By \eqref{hol_conti}, $I\!I\!I$ is estimated as
\begin{align}%
\label{III}
	I\!I\!I &\le C(q) \| \omega \|_{L^q(\re^2)}
		\int_{B_R\setminus B_{R/2}} |x|^{1-2/q} | \omega |^{q} | \nabla \psi_R | \,dx\\
\notag
	&\le C(q) R^{-2/q} \| \omega \|_{L^q(\re^2)} \| \omega \|_{L^q(B_R\setminus B_{R/2})}^q.
\end{align}%
We also have
\begin{align}%
\label{IV}
	IV &\le C(q,v(0)) \int_{B_R\setminus B_{R/2}} | \omega |^{q} | \nabla \psi_R | \,dx\\
\notag
	&\le C(q,v(0)) R^{-1} \| \omega \|_{L^q(B_R\setminus B_{R/2})}^q.
\end{align}%
Therefore, combining \eqref{vor_ene_id2}, \eqref{I}, \eqref{III} and \eqref{IV},
we conclude
\begin{align*}%
	\int_{\re^2} |\nabla \omega|^2 |\omega|^{q-2} \psi_R \,dx
	&\le C(q,l,v(0)) (R^{-2} + R^{-2/q} \| \omega \|_{L^q(\re^2)} + R^{-1} )
		\| \omega \|_{L^q(B_R\setminus B_{R/2})}^q.
\end{align*}%
Letting $R\to \infty$, we have
\begin{align*}%
	\int_{\re^2} |\nabla \omega|^2 |\omega|^{q-2} \,dx = 0,
\end{align*}%
which leads to
$|\nabla \omega| | \omega| = 0$ in $\re^2$,
namely,
$\nabla (\omega^2) = 0$
and hence,
$\omega$
must be a constant.
This and
$\omega \in L^q(\re^2)$ with some $q \in (2,\infty)$ show $\omega \equiv 0$.
From this and $\diver v = 0$,
it follows that
$v$
is a harmonic vector function,
and we conclude the constancy of
$v$
by the Liouville theorem for the harmonic functions.

\section*{acknowledgement}
The authors would like to thank Professor Yasunori Maekawa for helpful discussions.
This work was supported by JSPS Grant-in-Aid for Scientific Research(S)
Grant Number JP16H06339.

\end{document}